\newcommand{\pq}[2]{\genfrac{[}{]}{0pt}{}{#1}{#2}}
\theoremstyle{plain}
\newtheorem{theorem}{Theorem}
\newtheorem{lemma}[theorem]{Lemma}
\newtheorem{corollary}[theorem]{Corollary}
\theoremstyle{definition}
\theoremstyle{remark}
\begin{document}

\title{\bf Generalized $q$-Stirling numbers and normal ordering}

\author[R.B.~Corcino]{Roberto B.~Corcino}
\address{Mathematics and ICT Department, Cebu Normal University, 6000 Cebu City, Philippines}
\email[R.B.~Corcino]{rcorcino@yahoo.com} 

\author[R.O.~Celeste]{Richell O.~Celeste}
\author[K.J.M.~Gonzales]{Ken Joffaniel M.~Gonzales}
\address{Institute of Mathematics, University of the Philippines Diliman, 1101 Quezon City, Philippines}
\email[R.O.~Celeste]{ching@math.upd.edu.ph}
\email[K.J.M.~Gonzales]{kmgonzales@upd.edu.ph}

\begin{abstract}
The normal ordering coefficients of strings consisting of $V,U$ which satisfy $UV=qVU+hV^s$ ($s\in\mathbb N$) are considered. These coefficients are studied in two contexts: first, as a multiple of a sequence satisfying a generalized recurrence, and second, as $q$-analogues of rook numbers under the row creation rule introduced by Goldman and Haglund. A number of properties are derived, including recurrences, expressions involving other $q$-analogues and explicit formulas. We also give a Dobinsky-type formula for the associated Bell numbers and the corresponding extension of Spivey's Bell number formula. The coefficients, viewed as rook numbers, are extended to the case $s\in\mathbb R$ via a modified rook model.\end{abstract}

\subjclass[2010]{Primary 05A15; Secondary, 11B65, 11B73}
\keywords{Generalized Stirling numbers, normal ordering, rook theory, Bell numbers} 
\maketitle

\section{Introduction} Let $V$ and $U$ be operators (or variables) that satisfy the commutation relation $[U,V]=UV-VU=1$. Given a string $w$ consisting of $U$'s and $V$'s, the \emph{normally ordered} form of $w$ is an equivalent operator expressed as $\sum c_{i,j} V^i U^j$. The normally ordered form may be computed using the commutation relation alone, i.e., by replacing all occurrences of $UV$ with $VU+1$, a task which can be cumbersome especially for long strings. It has been shown, however, that the normal ordering coefficients are numbers studied in combinatorics. In quantum physics, for instance, one encounters the boson operators $a$ and $a^\dagger$ which satisfy $[a,a^\dagger] =aa^\dagger - a^\dagger a = 1$. It is known that $(a^\dagger a)^n = \sum_{k=0}^n S(n,k) {a^\dagger}^k a^k$, where the $S(n,k)$ are the Stirling numbers of the second kind. Combinatorially, the number $S(n,k)$ counts the number of partitions of $\{1,2,\ldots,n\}$ into $k$ disjoint, non-empty subsets. Since only the commutation relation is considered, it follows that any pair of operators that satisfies the same commutation relation will have the same coefficients. One such pair of operators is given by $X$ and $D$ which act on the monomial $x^n$ by $Vx^n=x^{n+1}$ and $Ux^n=nx^{n-1}$. Since $[U,V]=1$, then $(VU)^n = \sum_{k=0}^n S(n,k) V^k U^k$.

The connection between normal ordering and rook theory was demonstrated by Navon \cite{Nav} who proved that the normal ordering coefficients are given by rook numbers on a Ferrers board. Varvak \cite{Var} derived explicit formulas for these coefficients using rook factorization. Blasiak \cite{Bla}, and El-Desouky Caki\'c and Mansour \cite{DesCakMan} also computed explicit formulas using other methods. These coefficients may be considered as some form of generalized Stirling numbers.

Generalizations of the classical commutation relation have also been introduced and studied. Katriel and Kibler \cite{KatKib} considered the $q$-deformed commutation relation $[a,a^\dagger]_q = aa^\dagger - qa^\dagger a = 1$ and showed that the coefficients of $(a^\dagger a)^n$ are a $q$-analogue of the Stirling numbers of the second kind, i.e., $(a^\dagger a)^n = \sum_{k=0}^n S_q[n,k] {a^\dagger}^k a^k$. By letting $q=1$, we recover the non-deformed commutation relation and the Stirling numbers of the second kind. Mansour and Schork \cite{ManSch} extensively studied normal ordering relations of the form $xy=qyx+hf(y)$. For the case where $f(y)=y^s$, they derived the normal ordering of the expression $(x+y)^n$, thereby extending Newton's binomial formula. In \cite{ManSchSha1}, they focused on the coefficients of $(VU)^n$, where $V,U$ satisfy $UV - VU = hV^s$ and obtained properties of the corresponding generalization of Stirling numbers and Bell numbers. They continued the study of these numbers in \cite{ManSchSha2} where they obtained more properties and introduced a $q$-analogue through the relation $UV - qVU = hV^s$. (We also mention that the papers \cite{ManSch, ManSchSha1, ManSchSha2} also contain an excellent account of some literature on the subject.)

In this paper, we continue the study of normal ordering coefficients given the commutation relation $UV - qVU = hV^s$ where $s\in\mathbb N$. The outline of the paper is as follows. In Section \ref{first}, we study the properties of the coefficients of $(VU)^n$ in the context of a sequence satisfying a generalized recurrence relation. In Section \ref{second}, we obtain explicit formulas for the normal ordering coefficients of arbitrary strings $H_{\textnormal{\textbf{r}},\textnormal{\textbf{s}}}=V^{r_n}U^{s_n}\cdots V^{r_2}U^{s_2}V^{r_1}U^{s_1}$ using an interpretation of normal ordering in terms of rook placements. In Section \ref{third}, we resume to the coefficients of $(VU)^n$ and further exploit the rook model to obtain identities for the associated Bell numbers. Finally, in Section \ref{fourth}, we extend the coefficients to the case $s\in\mathbb R$ using a modified rook model.

As we shall be dealing with many forms of generalized Stirling numbers, we summarize some notation in the table below for the convenience of the reader. The presence of the parameters $\textbf r, \textbf s,h,s$ and $q$ means that they are assumed to be arbitrary. Square brackets are used to refer to $q$-analogues. A similar convention is applied to generalized Bell numbers.

\begin{table}[htbp]
\label{table}
\begin{center}
\begin{tabular}{|l|l|l|l|}
\hline \textbf{String} & \textbf{Parameters} & \textbf{Coefficients}
& \textbf{Original Notation}\\ \hline $(VU)^n$ & $q=1$, $h=1,s=0$ & $S(n,k)$ &\\

\hline $(VU)^n$ & $q$ arbitrary, $h=1,s=0$ & $S_q[n,k]$ &\\

\hline $(VU)^n$ & $q=1,h\neq 0,s$ arbitrary & $S_{s,h}(n,k)$ & $\mathfrak S_{s;h}(n,k)$ \cite{ManSchSha1,ManSchSha2}\\

\hline $(VU)^n$ & $q,h\neq 0,s$ arbitrary & $S_{s,h,q}[n,k]$ & $\mathfrak S_{s;h}(n,k|q)$ \cite{ManSchSha2} \\

\hline $H_{\textbf r, \textbf s}$ & $q=1,h=1,s=0$ & $S^{\textbf r,\textbf s}(k)$ & $S_{\textbf r,\textbf s}(k)$ \cite{Bla} \\ \hline $H_{\textbf r, \textbf s}$ & $q,h\neq 0,s$ arbitrary & $S^{\textbf r,\textbf s}_{s,h,q}[k]$ & \\ \hline \end{tabular} \end{center} \end{table}

\section{The numbers $S_{s,h,q}[n,k]$}\label{first}

For $x\in \mathbb R$, we define the $q$-analogue of $x$ by $[x]_q=\frac{q^x-1}{q-1}$ for $x\neq 0$ and $[0]_q=0$. As $q\rightarrow 1^{-}$, $[x]_q \rightarrow x$. If $x\in \mathbb N$, then $[x]_q = 1+q+\cdots+q^{x-1}$ so that the replacement $q=1$ is sufficient to recover $x$.

Let $h\in\mathbb C \backslash\{0\}$ and $s\in \mathbb N$. For $U,V$ that satisfy the commutation relation $UV-qVU=hV^s$. Mansour, Schork and Shattuck \cite{ManSchSha1} showed that the coefficients $S_{s,h,q}[n,k]$ in
\begin{equation*}
(VU)^n = \sum_{k=0}^n S_{s,h,q}[n,k] V^{sn-(s-1)k} U^k 
\end{equation*} 
satisfy the recurrence relation
\begin{equation}\label{recrel}
S_{s,h,q}[n,k] = q^{s(n-1)-(s-1)(k-1)}S_{s,h,q}[n-1,k-1] + h[s(n-1)-(s-1)k]_qS_{s,h,q}[n-1,k]\,,
\end{equation}
with initial conditions $S_{s,h,q}[n,0]=S_{s,h,q}[0,n]=\delta_{0,n}$. If $q=1$, we obtain the commutation relation $UV-VU=hV^s$ and the numbers $S_{s,h}(n,k)$ which were studied in \cite{ManSchSha1}. Mansour, Schork and Shattuck \cite{ManSchSha2} also showed that the number $S_{s,h,q}[n,k]$ can be expressed in terms of $b_{n,k} = q^{(s-1)\binom{k}{2}-s\binom{n}{2}} S_{s,h,q}[n,k] $ where $b_{n,k}$ satisfies
\begin{equation}\label{bnkrec}
b_{n,k} = b_{n-1,k-1} + (h/q) ([s(n-1)]_{1/q} - [(s-1)k]_{1/q}) b_{n-1,k}\,.
\end{equation}

The recurrence \eqref{bnkrec} is useful in deriving some basic properties. However, we will find it more convenient to work on a more general setting. Specifically, let $\textbf v=(v_0,v_1,\ldots)$ and $\textbf w=(w_0,w_1,\ldots)$ be sequences from a ring $K$ and define the numbers
$A^{\textnormal{\textbf {v}}, \textnormal{\textbf{w}}}_{n,k}$ by the recurrence relation
\begin{equation}\label{ankrec}
A^{\textnormal{\textbf {v}}, \textnormal{\textbf{w}}}_{n,k} = A^{\textnormal{\textbf {v}}, \textnormal{\textbf{w}}}_{n-1,k-1} + (v_{n-1} + w_{k}) A^{\textnormal{\textbf {v}}, \textnormal{\textbf{w}}}_{n-1,k}\,,
\end{equation}
with initial conditions $A^{\textnormal{\textbf {v}}, \textnormal{\textbf{w}}}_{0,n}=\delta_{0,n}$ and $A^{\textnormal{\textbf {v}}, \textnormal{\textbf{w}}}_{n,0} = (v_{n-1} + w_0) (v_{n-2} + w_0) \cdots (v_{0} + w_0)$. We also call $\textbf v$ and $\textbf w$ weight functions. When only the value of a weight function at $i=0,1,\ldots$ is specified (for instance, $v_i$), it is understood that the corresponding weight function is the same letter in boldface without the subscripts. If $v_i=[si]_{1/q},w_i=-[(s-1)i]_{1/q}$, then by \eqref{bnkrec}, $A^{\textnormal{\textbf {v}}, \textnormal{\textbf{w}}}_{n,k} = (h/q)^{-(n-k)} q^{(s-1)\binom{k}{2}-s\binom{n}{2}} S_{s,h,q}[n,k]$.

We mention that for linear weight functions, the recurrence relation \eqref{ankrec} has been considered by Xu \cite{Xu}, Hsu and Shuie \cite{HsuShu} and El-Desouky and Caki\'c \cite{DesCak}. Some $q$- and $p,q$-analogues have also been studied by Corcino, Hsu and Tan \cite{CorHsuTan}, and Remmel and Wachs \cite{RemWac} (although we note that the type-II $p,q$-analogue in \cite{RemWac} does not seem to fall as $A^{\textnormal{\textbf {v}}, \textnormal{\textbf{w}}}_{n,k}$). An explicit formula was obtained in \cite{ManMulSha} and \cite{Xu}, which is given by
\begin{equation*}
A^{\textnormal{\textbf {v}}, \textnormal{\textbf{w}}}_{n,k} 
= \sum_{j=0}^k \frac{\prod_{i=0}^{n-1} (w_j + v_i)}{\prod_{\substack{i=0,\neq j}}^k (w_j-w_i)}\,,
\end{equation*}
provided that the $w_i$'s are distinct. For $s\neq 1$, the formula above gives us (see \cite[Theorem 45]{ManSchSha2})
\begin{equation*} S_{s,h,q}[n,k] = h^{n-k}q^{s\binom{n}{2}-(s-1)\binom{k}{2}-(n-k)} \sum_{j=0}^k \frac{\prod_{i=0}^{n-1} ([si]_{1/q} - [(s-1)j]_{1/q})}{\prod_{\substack{i=0,\neq j}}^k ([(s-1)i]_{}-[(s-1)j]_{1/q})}\,.
\end{equation*}

% It appears that the relation between the numbers $b_{n,k}$ and $S_{s,h,q}[n,k]$ warrants a further study of $A^{\textnormal{\textbf {v}},\textnormal{\textbf{w}}}_{n,k}$ as a tool in obtaining properties of $S_{s,h,q}[n,k]$ and understanding its structure. A study of the numbers $A^{\textnormal{\textbf {v}}, \textnormal{\textbf{w}}}_{n,k}$ is therefore the goal of this section.

The theorem that follows gives different formulations for the numbers $A^{\textnormal{\textbf {v}}, \textnormal{\textbf{w}}}_{n,k}$ in terms of expressions that are analogous to elementary and complete symmetric functions.

\begin{theorem}\label{symfcn} The following identities hold: 
\begin{align}
A^{\textnormal{\textbf {v}}, \textnormal{\textbf{w}}}_{n,k} &=\sum_{0\leq i_1 < i_2 < ... < i_{n-k} \leq n-1}~\prod_{j=1}^{n-k} (v_{i_j} + w_{i_j-j+1}) \label{tdset}\\
A^{\textnormal{\textbf {v}}, \textnormal{\textbf{w}}}_{n,k} &= \sum_{0\leq i_1 \leq i_2 \leq ... \leq i_{n-k} \leq k}~\prod_{j=1}^{n-k} (v_{i_j+j-1} + w_{i_j}) \label{tset}\\
A^{\textnormal{\textbf {v}}, \textnormal{\textbf{w}}}_{n,k} &=\sum_{i_0+ i_1 + i_2 + ... + i_k = n-k} ~\prod_{j=0}^{k}~\prod_{l=0}^{i_j-1} (v_{j + l + i_0 + i_1+i_2+ ... +i_{j-1}} + w_j) \,.\label{tsetalt}
\end{align}
\begin{proof} It can be shown by partitioning the set of indices that the $RHS$ of \eqref{tdset} and \eqref{tset} satisfy the recursion \eqref{ankrec}. Identity \eqref{tsetalt} is a restatement of \eqref{tset}. \end{proof} \end{theorem}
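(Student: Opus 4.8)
The plan is to establish all three identities at once by the standard uniqueness device: I would show that each right-hand side satisfies the defining recurrence \eqref{ankrec} together with the stated boundary data, and then invoke the fact that \eqref{ankrec} with its initial conditions determines $A^{\textbf{v},\textbf{w}}_{n,k}$ uniquely. Write $B_{n,k}$ and $C_{n,k}$ for the sums on the right of \eqref{tdset} and \eqref{tset}, respectively. The boundary checks are quick and I would dispatch them first: for $n=0$ the index set is empty unless $k=0$, where the empty product gives $1$, so $B_{0,k}=C_{0,k}=\delta_{0,k}$; for $k=0$ there is a unique admissible tuple (namely $i_j=j-1$ in \eqref{tdset} and $i_j=0$ in \eqref{tset}), and in both cases the product collapses to $(v_{n-1}+w_0)\cdots(v_0+w_0)$, matching $A^{\textbf{v},\textbf{w}}_{n,0}$.

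For \eqref{tdset}, I would split the strictly increasing tuples $0\le i_1<\cdots<i_{n-k}\le n-1$ according to whether the top available index $n-1$ occurs. The tuples avoiding $n-1$ live in $\{0,\ldots,n-2\}$ and reproduce exactly the sum defining $B_{n-1,k-1}$. If $n-1$ occurs it must be the last entry $i_{n-k}=n-1$, whose factor is $v_{n-1}+w_{(n-1)-(n-k)+1}=v_{n-1}+w_k$; pulling this out leaves a sum over $0\le i_1<\cdots<i_{n-k-1}\le n-2$, which is $B_{n-1,k}$. Hence $B_{n,k}=B_{n-1,k-1}+(v_{n-1}+w_k)B_{n-1,k}$, which is \eqref{ankrec}.

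The argument for \eqref{tset} is parallel, splitting the weakly increasing tuples $0\le i_1\le\cdots\le i_{n-k}\le k$ according to whether the last entry attains the maximal value $k$. The tuples with $i_{n-k}\le k-1$ give $C_{n-1,k-1}$, while those with $i_{n-k}=k$ contribute the factor $v_{i_{n-k}+(n-k)-1}+w_{i_{n-k}}=v_{n-1}+w_k$, after which the remaining entries range over $0\le i_1\le\cdots\le i_{n-k-1}\le k$ and yield $C_{n-1,k}$. I expect the only delicate point in both derivations to be the bookkeeping of the index shifts inside the arguments of $v$ and $w$: one must verify that the pulled-out factor genuinely simplifies to $v_{n-1}+w_k$ and that the residual range of indices matches the correct smaller instance. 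This is where an off-by-one slip is most likely, so I would track the relabeling of the surviving factors explicitly rather than by inspection.

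Finally, \eqref{tsetalt} requires no separate recurrence; it is a regrouping of \eqref{tset}. Given a weakly increasing tuple, I would record the multiplicities of its values by letting $i_j$ be the number of entries equal to $j$, so that $i_0+\cdots+i_k=n-k$. Because the tuple is nondecreasing, the positions carrying value $j$ form a consecutive block beginning at position $i_0+\cdots+i_{j-1}+1$; writing such a position as $i_0+\cdots+i_{j-1}+l+1$ with $0\le l\le i_j-1$ turns the factor $v_{j+\mathrm{pos}-1}+w_j$ into $v_{j+l+i_0+\cdots+i_{j-1}}+w_j$, exactly the inner product in \eqref{tsetalt}. Summing over all compositions $i_0+\cdots+i_k=n-k$ then reassembles \eqref{tset}, completing the proof.
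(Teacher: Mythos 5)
Your proposal is correct and takes essentially the same approach as the paper: it verifies that the right-hand sides of \eqref{tdset} and \eqref{tset} satisfy the recurrence \eqref{ankrec} by partitioning the index tuples (according to whether the maximal index occurs), and treats \eqref{tsetalt} as a regrouping of \eqref{tset} by value multiplicities. The paper's proof is only a sketch, and your write-up correctly supplies the boundary checks and index bookkeeping it omits.
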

 
Let $H$ be a set. The $m$-th \emph{elementary symmetric function} $e$ and $m$-th \emph{complete symmetric function} $h$ are defined as follows: $e_{m}(H)$ (resp. $h_{m}(H)$) is the sum of all products of $m$ elements from $H$ taken without (resp. with) replacement. Denote by $\textbf 0$ the function that is identically 0. Observe that by \eqref{tdset}, $A^{\textbf {v}, \textbf{0}}_{n,k}=e_{n-k}(\{v_0,v_1,\ldots,v_{n-1}\})$ and by \eqref{tset} or \eqref{tsetalt}, $A^{\textbf {0}, \textbf{v}}_{n,k}=h_{n-k}(\{v_0,v_1,\ldots,v_{k}\})$.

In the next theorem, we denote the matrix whose $(n,k)$-th entry is $A^{\textnormal{\textbf {v}}, \textnormal{\textbf{w}}}_{n,k}$ by $\left[A^{\textnormal{\textbf {v}}, \textnormal{\textbf{w}}}_{n,k}\right]$.

\begin{theorem}\label{big1} The following identities hold:
\begin{enumerate}

\item A matrix factorization
\begin{align}
\left[ A^{\textnormal{\textbf {v}}, \textnormal{\textbf{w}}}_{n,k} \right] = \left[
A^{\textnormal{\textbf {v}}, \textnormal{\textbf{0}}}_{n,k} \right] \left[ A^{\textnormal{\textbf {0}}, \textnormal{\textbf{w}}}_{n,k}
\right]\mbox{, or equivalently, }~~A^{\textnormal{\textbf {v}}, \textnormal{\textbf{w}}}_{n,k} &= \sum_{j=k}^{n} A^{\textnormal{\textbf {v}},
\textnormal{\textbf{0}}}_{n,j} A^{\textnormal{\textbf {0}}, \textnormal{\textbf{w}}}_{j,k} \label{matfac}
\end{align}

\item Orthogonality relation
\begin{align}
\sum_{k=m}^n A^{\textnormal{\textbf {v}}, \textnormal{\textbf{w}}}_{n,k} A^{\textnormal{\textbf {-w}},
\textnormal{\textbf{-v}}}_{k,m} &= \delta_{n,m}\label{orth}
\end{align}

\item A pair of inverse relations
\begin{align}
(x+v_0)(x+v_1) \cdots (x+v_{n-1}) &= \sum_{k=0}^n A^{\textnormal{\textbf {v}}, \textnormal{\textbf{w}}}_{n,k} (x-w_0)(x-w_1) \cdots (x-w_{k-1}) \label{inv1}\\
(x-w_0)(x-w_1) \cdots (x-w_{n-1}) &= \sum_{k=0}^n A^{\textnormal{\textbf {-w}}, \textnormal{\textbf{-v}}}_{n,k} (x+v_0)(x+v_1) \cdots
(x+v_{k-1})\label{inv2}
\end{align}

\item Convolution formula
\begin{align} A^{\textnormal{\textbf {v}}, \textnormal{\textbf{w}}}_{l+m,n} &=
\sum_{k=0}^n A^{\textnormal{\textbf {v}}, \textnormal{\textbf{w}}}_{l,k} A^{\textnormal{\textbf{v}}_{+l},
\textnormal{\textbf{w}}_{+k}}_{m,n-k}\,, \label{conv}\mbox{~where for any weight function $\textnormal{\textbf{f}}$, we define
$\textnormal{\textbf{f}}_{+m}=(f_m,f_{m+1},\ldots)$}\,.
\end{align}
\end{enumerate}
\begin{proof} We prove the second equation in \eqref{matfac} by showing that the quantity on the $RHS$ satisfies the recurrence relation \eqref{ankrec}. Let $k\leq j\leq n$. By \eqref{ankrec},
\begin{align*}
A^{\textbf {v}, \textbf{0}}_{n,j}A^{\textbf {0}, \textbf{w}}_{j,k} &= (A^{\textbf {v}, \textbf{0}}_{n-1,j-1} + v_{n-1} A^{\textbf{v}, \textbf{0}}_{n-1,j}) (A^{\textbf {0}, \textbf{w}}_{j-1,k-1} + w_{k} A^{\textbf {0}, \textbf{w}}_{j-1,k}) \\
&= A^{\textbf {v},\textbf{0}}_{n-1,j-1} A^{\textbf {0}, \textbf{w}}_{j-1,k-1} + v_{n-1} A^{\textbf {v}, \textbf{0}}_{n-1,j} A^{\textbf {0}, \textbf{w}}_{j,k} + w_k A^{\textbf {v}, \textbf{0}}_{n-1,j-1} A^{\textbf {0}, \textbf{w}}_{j-1,k}
\end{align*}
Substituting this into \eqref{matfac} and collecting the necessary terms proves the identity.

For the orthogonality relation \eqref{orth}, we use $\left[ A^{\textbf {0}, \textbf{v}}_{n,k}\right]^{-1} = \left[ (-1)^{n-k} A^{\textbf {v}, \textbf{0}}_{n,k}\right] = \left[ A^{\textbf {-v}, \textbf{0}}_{n,k}\right]$ (see \cite[Identity (2.10)]{MedLer2}). By \eqref{matfac}, $\left[A^{\textnormal{\textbf {v}}, \textnormal{\textbf{w}}}_{n,k}\right]^{-1} = \left[ A^{\textbf {0}, \textbf{w}}_{n,k}\right]^{-1} \left[ A^{\textbf{v}, \textbf{0}}_{n,k}\right]^{-1} = \left[ A^{\textbf {-w}, \textbf{0}}_{n,k}\right] \left[ A^{\textbf {0}, \textbf{-v}}_{n,k}\right] = \left[A^{\textbf {-w}, \textbf{-v}}_{n,k}\right]$.

An inductive argument and the recursion \eqref{ankrec} proves \eqref{inv1}. Identity \eqref{orth} then establishes the equivalence of \eqref{inv1} with \eqref{inv2}. For \eqref{conv}, we use \eqref{tdset} and break each $(l+m-n)$-tuple of indices into two, namely, into those that satisfy $0\leq i_1 < \ldots <i_{l-k}\leq l-1$ and $l\leq i_{l-k+1} < \ldots < i_{l+m-n}\leq m+l-1$, for some $k$. This $k$ is unique since each coordinate in every $(l+m-n)$-tuple is unique. The first set of indices will produce $A^{\textnormal{\textbf {v}}, \textnormal{\textbf{w}}}_{l,k}$. On the other hand, the second set will produce 
\begin{align*}
&\sum_{l\leq i_{l-k+1} < \ldots < i_{l+m-n}\leq m+l-1}~\prod_{j=l-k+1}^{l+m-n} (v_{i_j}+w_{i_j - (j-1)})\\
&=\sum_{0\leq i_{1} < \ldots < i_{m-(n-k)}\leq m-1} \prod_{j=1}^{m-(n-k)} (v_{i_j+l}+w_{i_j - (j-1)+k})\\
&=A^{\textbf v_{+l}, \textbf w_{+k}}_{m,n-k}\,.
\end{align*}

All identities have now been proved. \end{proof} \end{theorem}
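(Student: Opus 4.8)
The plan is to establish the four parts in a dependent order, treating the matrix factorization \eqref{matfac} as the keystone from which the orthogonality relation follows by inversion, while the inverse relations and the convolution formula are handled more directly from the recurrence \eqref{ankrec} and the symmetric-function formula \eqref{tdset}, respectively.

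First, for \eqref{matfac} I would set $B_{n,k} = \sum_{j=k}^n A^{\textbf{v},\textbf{0}}_{n,j}A^{\textbf{0},\textbf{w}}_{j,k}$ and verify that $B_{n,k}$ obeys the defining recurrence \eqref{ankrec} with matching initial conditions. Since $w_j$ never appears in $A^{\textbf{v},\textbf{0}}$ and $v_j$ never appears in $A^{\textbf{0},\textbf{w}}$, the two factors satisfy the simpler two-term recurrences $A^{\textbf{v},\textbf{0}}_{n,j} = A^{\textbf{v},\textbf{0}}_{n-1,j-1} + v_{n-1}A^{\textbf{v},\textbf{0}}_{n-1,j}$ and $A^{\textbf{0},\textbf{w}}_{j,k} = A^{\textbf{0},\textbf{w}}_{j-1,k-1} + w_k A^{\textbf{0},\textbf{w}}_{j-1,k}$. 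Expanding the product $A^{\textbf{v},\textbf{0}}_{n,j}A^{\textbf{0},\textbf{w}}_{j,k}$ and regrouping it into three terms (one carrying a factor $v_{n-1}$, one carrying $w_k$, and one shifted-by-one term), then summing over $j$ and reindexing, I expect the three sums to collapse to $B_{n-1,k-1}$, $v_{n-1}B_{n-1,k}$, and $w_k B_{n-1,k}$, which is exactly \eqref{ankrec}. The care-demanding point here is the bookkeeping of the summation endpoints $j=k$ and $j=n$ under reindexing; the stray boundary terms should vanish because $A^{\textbf{v},\textbf{0}}_{n-1,n}=0$ and the analogous entries are zero, so no spurious contributions survive.

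Given \eqref{matfac}, the orthogonality relation \eqref{orth} is immediate once I record the known inverse $\left[A^{\textbf{0},\textbf{v}}_{n,k}\right]^{-1} = \left[A^{\textbf{-v},\textbf{0}}_{n,k}\right]$ (the signed elementary/complete symmetric-function duality). Inverting the factorization gives $\left[A^{\textbf{v},\textbf{w}}_{n,k}\right]^{-1} = \left[A^{\textbf{0},\textbf{w}}_{n,k}\right]^{-1}\left[A^{\textbf{v},\textbf{0}}_{n,k}\right]^{-1} = \left[A^{\textbf{-w},\textbf{-v}}_{n,k}\right]$, which is \eqref{orth} read off entrywise. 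For the inverse relations I would prove \eqref{inv1} by induction on $n$: multiply the degree-$(n-1)$ identity by $(x+v_{n-1})$, then split the new factor as $(x+v_{n-1}) = (x-w_k)+(v_{n-1}+w_k)$ inside each summand, so that $(x-w_k)$ extends the falling product $(x-w_0)\cdots(x-w_{k-1})$ to length $k+1$; reindexing that piece and combining it with the $(v_{n-1}+w_k)$-piece reproduces precisely \eqref{ankrec} and hence the degree-$n$ identity (with the $k=0$ and $k=n$ cases matching the stated initial conditions). Then \eqref{inv2} is just \eqref{inv1} under the substitution $\textbf{v}\mapsto\textbf{-w}$, $\textbf{w}\mapsto\textbf{-v}$, equivalently it is forced by \eqref{orth}.

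Finally, the convolution \eqref{conv} I would obtain combinatorially from \eqref{tdset}, which writes $A^{\textbf{v},\textbf{w}}_{l+m,n}$ as a sum over strictly increasing $(l+m-n)$-tuples $0\le i_1<\cdots<i_{l+m-n}\le l+m-1$. Each such tuple splits uniquely at the threshold value $l$: the coordinates below $l$ form a strictly increasing tuple in $\{0,\ldots,l-1\}$ and, for a unique $k$, contribute $A^{\textbf{v},\textbf{w}}_{l,k}$, while the coordinates that are at least $l$, after shifting indices down by $l$, yield the factor $A^{\textbf{v}_{+l},\textbf{w}_{+k}}_{m,n-k}$; summing over the split point $k$ gives \eqref{conv}. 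I anticipate the main obstacle to be verifying that the shifted weights line up correctly: the product $\prod (v_{i_j}+w_{i_j-(j-1)})$ must, after the substitutions $i_j\mapsto i_j+l$ and $j\mapsto j+k$, reproduce exactly $\prod (v_{i_j+l}+w_{i_j-(j-1)+k})$, i.e. the weights $\textbf{v}_{+l}$ and $\textbf{w}_{+k}$, and it is precisely this index arithmetic where an off-by-one slip is easiest to make.
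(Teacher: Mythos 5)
Your proposal is correct and follows essentially the same route as the paper: verifying that the sum $\sum_j A^{\textbf{v},\textbf{0}}_{n,j}A^{\textbf{0},\textbf{w}}_{j,k}$ satisfies \eqref{ankrec}, inverting the factorization via $\left[A^{\textbf{0},\textbf{v}}_{n,k}\right]^{-1}=\left[A^{\textbf{-v},\textbf{0}}_{n,k}\right]$ for \eqref{orth}, an induction with the recurrence for \eqref{inv1}, and the split of the strictly increasing index tuples from \eqref{tdset} at the threshold $l$ for \eqref{conv}. The only cosmetic difference is that you also note \eqref{inv2} follows from \eqref{inv1} by the substitution $\textbf{v}\mapsto-\textbf{w}$, $\textbf{w}\mapsto-\textbf{v}$, whereas the paper deduces it from \eqref{orth}; both are fine.
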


By Identity \eqref{matfac}, we can write $S_{s,h,q}[n,k]$ in terms of other $q$-Stirling numbers. Note
that the second kind of this analogue is different from the $q$-analogue $S_{0,1,q}[n,k]=S_q[n,k]$. By \eqref{recrel}, the numbers $S_q[n,k]$ satisfy the recurrence
\begin{equation*}
S_q[n,k] = q^{k-1} S_q[n-1,k-1] + [k]_q S_q[n-1,k]\,.
\end{equation*}
On the other hand, the other $q$-analogues satisfy
\begin{align*}
\hat{S}_q[n,k] &= \hat{S}_q[n-1,k-1] + [k]_q \hat{S}_q[n-1,k]\\
\hat{c}_q[n,k] &= \hat{c}_q[n-1,k-1] + [n-1]_q \hat{c}_q[n-1,k]\,.
\end{align*}
with $\hat{S}_q[n,0]=\hat{S}_q[0,n]=\hat{c}_q[n,0]=\hat{c}_q[0,n]=\delta_{n,0}$. Using these recurrence relations, we see that when $v_i=[i]_q$, $A^{\textbf {v}, \textbf{0}}_{n,k}=\hat{c}_q[n,k]$ and $A^{\textbf {0}, \textbf{v}}_{n,k}=\hat{S}_q[n,k]$. These analogues have been studied extensively by M\'edicis and Leroux \cite{MedLer1}.

\begin{corollary} The numbers $S_{s,h,q}[n,k]$ satisfy
\begin{equation}\label{exp1}
S_{s,h,q}[n,k] = h^{(n-k)} q^{s\binom{n}{2}-(s-1)\binom{k}{2}-n+k} \sum_{j=k}^n [s]_{1/q}^{n-j} (-[s-1]_{1/q})^{j-k} \hat{c}_{1/{q^{s}}}[n,j] \hat{S}_{1/{q^{s-1}}}[j,k]\,.
\end{equation}
Consequently, the numbers $S_{s,h}(n,k)$ are given by
\begin{equation*}
S_{s,h}(n,k) = h^{n-k} \sum_{j=k}^n s^{n-j} (1-s)^{j-k} c(n,j) S(j,k)\,.
\end{equation*}
where $c(n,k)$ and $S(n,k)$ are the (classical) Stirling numbers of the first kind and second kind, respectively.
\begin{proof} The result follows from \eqref{matfac} and the property $[si]_{1/q}=[s]_{1/q}[i]_{1/q^s}$, $[(s-1)i]_{1/q}=[s-1]_{1/q}[i]_{1/q^{s-1}}$. \end{proof} \end{corollary}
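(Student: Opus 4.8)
The plan is to reduce the statement to the matrix factorization \eqref{matfac} after rewriting $S_{s,h,q}[n,k]$ in terms of the numbers $A^{\mathbf{v},\mathbf{w}}_{n,k}$. First I would take the weight functions $v_i=[si]_{1/q}$ and $w_i=-[(s-1)i]_{1/q}$, for which the relation recorded just after \eqref{bnkrec} gives $A^{\mathbf{v},\mathbf{w}}_{n,k}=(h/q)^{-(n-k)}q^{(s-1)\binom{k}{2}-s\binom{n}{2}}S_{s,h,q}[n,k]$. Solving for $S_{s,h,q}[n,k]$ and using $(h/q)^{n-k}=h^{n-k}q^{k-n}$ produces exactly the prefactor $h^{n-k}q^{s\binom{n}{2}-(s-1)\binom{k}{2}-n+k}$ of \eqref{exp1}; it therefore remains only to show that $A^{\mathbf{v},\mathbf{w}}_{n,k}$ equals the sum over $j$.

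Next I would expand $A^{\mathbf{v},\mathbf{w}}_{n,k}=\sum_{j=k}^n A^{\mathbf{v},\mathbf{0}}_{n,j}A^{\mathbf{0},\mathbf{w}}_{j,k}$ by \eqref{matfac} and evaluate the two factors separately. For the first, the identity $[si]_{1/q}=[s]_{1/q}[i]_{1/q^s}$ lets me write $v_i=[s]_{1/q}\,[i]_{1/q^s}$. By Theorem \ref{symfcn}, $A^{\mathbf{v},\mathbf{0}}_{n,j}$ is the elementary symmetric function $e_{n-j}(\{v_0,\ldots,v_{n-1}\})$, and since every monomial in $e_{n-j}$ is a product of exactly $n-j$ of the $v_i$, the scalar $[s]_{1/q}$ pulls out as $[s]_{1/q}^{\,n-j}$, leaving $e_{n-j}$ of the $[i]_{1/q^s}$. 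By the identification made just before the corollary this is $\hat{c}_{1/q^s}[n,j]$, so $A^{\mathbf{v},\mathbf{0}}_{n,j}=[s]_{1/q}^{\,n-j}\hat{c}_{1/q^s}[n,j]$.

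The second factor is treated identically using $[(s-1)i]_{1/q}=[s-1]_{1/q}[i]_{1/q^{s-1}}$, so $w_i=-[s-1]_{1/q}\,[i]_{1/q^{s-1}}$; since $A^{\mathbf{0},\mathbf{w}}_{j,k}$ is the complete homogeneous symmetric function $h_{j-k}(\{w_0,\ldots,w_k\})$, the common scalar $-[s-1]_{1/q}$ factors out as $(-[s-1]_{1/q})^{j-k}$ and leaves $\hat{S}_{1/q^{s-1}}[j,k]$. Substituting both factors into the expansion and reattaching the prefactor gives \eqref{exp1}. The consequence then follows by setting $q=1$: the power of $q$ becomes $1$, while $[s]_{1/q}\to s$, $-[s-1]_{1/q}\to 1-s$, $\hat{c}_{1/q^s}[n,j]\to c(n,j)$, and $\hat{S}_{1/q^{s-1}}[j,k]\to S(j,k)$, recovering the stated formula for $S_{s,h}(n,k)$.

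The proof is essentially bookkeeping; the one conceptual point requiring care is the homogeneity of $e_{n-j}$ and $h_{j-k}$ under rescaling of their arguments, which is precisely what converts the ``mixed-base'' weights $v_i,w_i$ into the pure $q$-analogues $\hat{c}$ and $\hat{S}$. The only genuine place to slip is in matching the $q$-exponents of the prefactor, so I would verify that arithmetic explicitly; note also that, unlike the earlier explicit formula, this route does not require the $w_i$ to be distinct.
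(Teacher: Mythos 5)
Your proposal is correct and follows essentially the same route as the paper: its one-line proof invokes \eqref{matfac} with the weights $v_i=[si]_{1/q}$, $w_i=-[(s-1)i]_{1/q}$ together with the factorizations $[si]_{1/q}=[s]_{1/q}[i]_{1/q^s}$ and $[(s-1)i]_{1/q}=[s-1]_{1/q}[i]_{1/q^{s-1}}$, which is exactly what you spelled out. Your extra steps---extracting the prefactor from the relation after \eqref{bnkrec}, and using the homogeneity of $e_{n-j}$ and $h_{j-k}$ to identify the rescaled factors with $\hat{c}_{1/q^s}$ and $\hat{S}_{1/q^{s-1}}$---are precisely the details the paper leaves implicit.
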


Define $c_{s,h,q}[n,k]$ by the recursion
\begin{equation*} c_{s,h,q}[n,k] = q^{(s-1)(n-1)-s(k-1)} c_{s,h,q}[n-1,k-1] + h [(s-1)(n-1)-sk]_q
c_{s,h,q}[n-1,k]\,,
\end{equation*}
with initial conditions $c_{s,h,q}[n,0]=c_{s,h,q}[0,n]=\delta_{0,n}$. The orthogonality relation for $S_{s,h,q}[n,k]$ involves $c_{s,h,q}[n,k]$ and is given in the next corollary.

\begin{corollary} For $m\leq n$, we have the following orthogonality relations
\begin{align*}
\sum_{k=m}^n S_{s,h,q}[n,k] c_{s,h,q}[k,m] &= \delta_{m,n} \\
\sum_{k=m}^n c_{s,h,q}[n,k]S_{s,h,q}[k,m] &= \delta_{m,n}\,.
\end{align*}
\begin{proof} Let $v_i=[si]_{1/q},w_i=-[(s-1)i]_{1/q}$. One can verify that $A^{\textbf {-w},
\textbf{-v}}_{n,k}=(h/q)^{-(n-k)}q^{s\binom{k}{2}-(s-1)\binom{n}{2}} c_{s,h,q}[n,k]$. The orthogonality relation \eqref{orth} implies that
\begin{equation*}
\sum_{k=m}^n (h/q)^{-(n-k)} q^{(s-1)\binom{k}{2}-s\binom{n}{2}} S_{s,h,q}[n,k] (h/q)^{-(k-m)}
q^{s\binom{m}{2}-(s-1)\binom{k}{2}} c_{s,h,q}[k,m] = \delta_{m,n}\,.
\end{equation*}
After clearing the powers of $h$ and $q$ we obtain the desired result. \end{proof} \end{corollary}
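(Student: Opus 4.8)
The plan is to deduce both relations from the general orthogonality \eqref{orth} by fitting $S_{s,h,q}[n,k]$ and $c_{s,h,q}[n,k]$ into the $A^{\textbf{v},\textbf{w}}_{n,k}$ framework. Following the dictionary already recorded after \eqref{ankrec}, I would take $v_i=[si]_{1/q}$ and $w_i=-[(s-1)i]_{1/q}$, for which $A^{\textbf{v},\textbf{w}}_{n,k}=(h/q)^{-(n-k)}q^{(s-1)\binom{k}{2}-s\binom{n}{2}}S_{s,h,q}[n,k]$. With this choice the conjugate array $A^{\textbf{-w},\textbf{-v}}_{n,k}$ uses the weights $-w_i=[(s-1)i]_{1/q}$ and $-v_i=-[si]_{1/q}$, so the crux of the argument is to identify this conjugate array as a rescaling of $c_{s,h,q}[n,k]$.

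The main step, and the only place where genuine computation is needed, is to verify
\[
A^{\textbf{-w},\textbf{-v}}_{n,k}=(h/q)^{-(n-k)}q^{s\binom{k}{2}-(s-1)\binom{n}{2}}c_{s,h,q}[n,k].
\]
I would prove this by showing that the right-hand side satisfies \eqref{ankrec} for the weights $\textbf{-w},\textbf{-v}$, namely $A^{\textbf{-w},\textbf{-v}}_{n,k}=A^{\textbf{-w},\textbf{-v}}_{n-1,k-1}+([(s-1)(n-1)]_{1/q}-[sk]_{1/q})A^{\textbf{-w},\textbf{-v}}_{n-1,k}$, together with matching boundary values. Substituting the claimed expression and using $\binom{k}{2}-\binom{k-1}{2}=k-1$ and $\binom{n}{2}-\binom{n-1}{2}=n-1$, the diagonal term collapses to the prefactor $q^{(s-1)(n-1)-s(k-1)}$ that appears in the defining recursion of $c_{s,h,q}[n,k]$. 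For the off-diagonal term the two needed identities are $[x]_{1/q}=q^{1-x}[x]_q$ and the $q$-subtraction rule $[a]_q-q^{a-b}[b]_q=[a-b]_q$; applying the first to $[(s-1)(n-1)]_{1/q}$ and $[sk]_{1/q}$ and then the second with $a=(s-1)(n-1)$, $b=sk$ turns the coefficient into exactly $h[(s-1)(n-1)-sk]_q$. The boundary conditions match immediately: $A^{\textbf{-w},\textbf{-v}}_{0,n}=\delta_{0,n}$, while $A^{\textbf{-w},\textbf{-v}}_{n,0}=\prod_{i=0}^{n-1}[(s-1)i]_{1/q}$ vanishes for $n\geq 1$ thanks to the $i=0$ factor, reproducing $c_{s,h,q}[n,0]=\delta_{0,n}$.

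With both identifications in hand I would feed $A^{\textbf{v},\textbf{w}}_{n,k}$ and $A^{\textbf{-w},\textbf{-v}}_{k,m}$ into \eqref{orth}. The prefactors telescope: the two powers of $h/q$ combine to $(h/q)^{-(n-m)}$, and the $(s-1)\binom{k}{2}$ contributions cancel, leaving a $k$-independent factor $(h/q)^{-(n-m)}q^{s\binom{m}{2}-s\binom{n}{2}}$ multiplying $\sum_{k=m}^n S_{s,h,q}[n,k]c_{s,h,q}[k,m]$. Since this factor equals $1$ when $n=m$ and is nonzero (as $h,q\neq 0$) otherwise, the Kronecker delta on the right of \eqref{orth} forces $\sum_{k=m}^n S_{s,h,q}[n,k]c_{s,h,q}[k,m]=\delta_{m,n}$. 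The second relation then follows because \eqref{orth} exhibits the rescaled arrays of $S_{s,h,q}$ and $c_{s,h,q}$ as mutually inverse finite lower-triangular matrices, whose inverse is two-sided; equivalently, one repeats the telescoping with the product taken in the opposite order. The only real obstacle is the recurrence verification for $A^{\textbf{-w},\textbf{-v}}_{n,k}$, and within it the single identity $[a]_q-q^{a-b}[b]_q=[a-b]_q$ is what makes the off-diagonal coefficient come out to $h[(s-1)(n-1)-sk]_q$; everything else is bookkeeping with exponents of $h$ and $q$.
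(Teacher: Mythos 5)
Your proposal is correct and follows essentially the same route as the paper: identify $S_{s,h,q}$ and $c_{s,h,q}$ as rescalings of $A^{\textbf{v},\textbf{w}}_{n,k}$ and $A^{\textbf{-w},\textbf{-v}}_{k,m}$ with $v_i=[si]_{1/q}$, $w_i=-[(s-1)i]_{1/q}$, feed them into \eqref{orth}, and clear the $k$-independent powers of $h$ and $q$. The only difference is that you spell out the recurrence verification (via $[x]_{1/q}=q^{1-x}[x]_q$ and $[a]_q-q^{a-b}[b]_q=[a-b]_q$) and the two-sidedness of the triangular inverse, both of which the paper leaves implicit.
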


The next theorem gives some recurrence relations for $A^{\textnormal{\textbf {v}}, \textnormal{\textbf{w}}}_{n,k}$. Identities \eqref{rec1} and \eqref{rec3} reduce into $q$-analogues of \cite[Theorems 5.5 and 5.4]{ManSchSha1}, respectively, which we state in the following corollary.

\begin{theorem}\label{big2} The following recurrence relations for $A^{\textnormal{\textbf {v}}, \textnormal{\textbf{w}}}_{n,k}$ hold
\begin{align}
A^{\textnormal{\textbf {v}}, \textnormal{\textbf{w}}}_{n,k} &= \sum_{j=k}^n A^{\textnormal{\textbf {v}}, \textnormal{\textbf{w}}}_{j-1,k-1} \prod_{i=j}^{n-1} (v_i + w_{k}) \label{rec1} \\ A^{\textnormal{\textbf {v}}, \textnormal{\textbf{w}}}_{n,k} &= \sum_{j=k}^n (-1)^{j-k} A^{\textnormal{\textbf {v}}, \textnormal{\textbf{w}}}_{n+1,j+1} \prod_{i=k+1}^j (v_n+w_i) \label{rec2}\\
A^{\textnormal{\textbf {v}}, \textnormal{\textbf{w}}}_{n,k} &= \sum_{j=0}^k (v_{n-j-1}+w_{k-j})A^{\textnormal{\textbf {v}},
\textnormal{\textbf{w}}}_{n-j-1,k-j}\label{rec3}\,.
\end{align}
\begin{proof} The proof of the three identities uses \eqref{tdset}.

For \eqref{rec1}, we use the fact that for every $(n-k)$-tuple of indices $(i_1,\ldots,i_{n-k})$, there exists a unique $j$ satisfying $k\leq j\leq n$ such that $(i_{j-k+1},i_{j-k+2}\ldots,i_{n-k})=(j,j+1,\ldots,n-1)$ and $(i_1,\ldots,i_{j-k})$ satisfies $0\leq i_1 < \ldots < i_{j-k} \leq j-2$. A similar approach proves \eqref{rec3}.

To prove identity \eqref{rec2}, let $I_j$ be the set of $(n-j)$-tuples of indices satisfying $0\leq i_1 < \ldots < i_{n-j} \leq n$. Let
$I'_j=I_j \times (\underbrace{n,\ldots,n}_{j-k})$ and $\mathcal{I}$ be the multiset consisting of the union of all $I'_j$'s, $k\leq j\leq n$. Assign the weight $(-1)^{j-k}$ to each element of $\mathcal {I}$ belonging to $I'_j$. Then each $(n-k)$-tuple in $\mathcal {I}$ which does not satisfy $0\leq i_1 < \ldots < i_{n-k} \leq n-1$ occurs twice but with weights that are negatives of each other. \end{proof} \end{theorem}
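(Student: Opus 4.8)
The plan is to derive all three recurrences from the elementary-symmetric expansion \eqref{tdset}, which writes $A^{\textnormal{\textbf{v}}, \textnormal{\textbf{w}}}_{n,k}$ as a weighted sum over strictly increasing tuples $0\le i_1<\cdots<i_{n-k}\le n-1$, the weight of such a tuple being $\prod_{t=1}^{n-k}(v_{i_t}+w_{i_t-t+1})$. Each of \eqref{rec1}, \eqref{rec2}, \eqref{rec3} is then just a different way of organizing this index set, and the bulk of the routine work is bookkeeping of the subscripts. The first two will be clean partitions of the index set; the third requires a sign-reversing cancellation, which I expect to be the main obstacle.

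For \eqref{rec1} I would classify each tuple by its maximal consecutive suffix. Given a tuple, let $j\in\{k,\dots,n\}$ be the unique index for which $(i_{j-k+1},\dots,i_{n-k})=(j,j+1,\dots,n-1)$ while $j-1$ does not appear (so $j=n$ when $n-1$ is absent, and $j=k$ when the whole tuple is the block $(k,\dots,n-1)$). The remaining head $(i_1,\dots,i_{j-k})$ then ranges freely over strictly increasing tuples in $\{0,\dots,j-2\}$, contributing $A^{\textnormal{\textbf{v}}, \textnormal{\textbf{w}}}_{j-1,k-1}$, while each suffix entry satisfies $i_t=t+k-1$, hence $i_t-t+1=k$, so the suffix contributes $\prod_{i=j}^{n-1}(v_i+w_k)$; summing over $j$ gives \eqref{rec1}. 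Identity \eqref{rec3} comes from the same device applied to a single index: classify each tuple by its largest entry, writing $i_{n-k}=n-1-j$ with $0\le j\le k$. That entry contributes the factor $v_{n-1-j}+w_{k-j}$ since $(n-1-j)-(n-k)+1=k-j$, and the remaining strictly increasing $(n-k-1)$-tuple in $\{0,\dots,n-2-j\}$ sums to $A^{\textnormal{\textbf{v}}, \textnormal{\textbf{w}}}_{n-1-j,k-j}$.

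Identity \eqref{rec2} is the one requiring genuine care. Expanding the right-hand side with \eqref{tdset}, the product $A^{\textnormal{\textbf{v}}, \textnormal{\textbf{w}}}_{n+1,j+1}\prod_{i=k+1}^{j}(v_n+w_i)$ becomes a sum over length-$(n-k)$ sequences obtained by appending $j-k$ copies of the entry $n$ to a strictly increasing $(n-j)$-tuple in $\{0,\dots,n\}$; one checks that the appended entries carry exactly the weights $v_n+w_i$ for $i=k+1,\dots,j$, so the total weight of such a sequence is again $\prod_{t=1}^{n-k}(v_{i_t}+w_{i_t-t+1})$, computed from the full sequence alone. Collecting these over all $j$, each tagged with the sign $(-1)^{j-k}$, into a signed multiset, I would then define an involution that toggles whether the topmost $n$ is counted as part of the strictly increasing block or as one of the appended copies: a sequence whose strictly increasing part ends in $n$ and arises from the parameter $j$ is identified with the same sequence produced by the parameter $j+1$, and conversely. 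This pairs every sequence containing at least one entry equal to $n$ with a partner of opposite sign, so all such terms cancel, leaving only the sequences with no entry equal to $n$, which are precisely the strictly increasing $(n-k)$-tuples in $\{0,\dots,n-1\}$; each survives once with sign $+1$, and by \eqref{tdset} they sum to $A^{\textnormal{\textbf{v}}, \textnormal{\textbf{w}}}_{n,k}$.

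The delicate points, and the part I expect to take the most care, are verifying that the weight of a sequence genuinely depends only on the sequence and not on the chosen decomposition (so that paired terms cancel term by term rather than merely in aggregate), that the pairing is well defined, and that it is fixed-point-free on exactly the sequences containing an $n$ (a strictly increasing block can hold at most one copy of $n$, which is what makes the two-to-one structure exact). Once these are in place, the cancellation is automatic and \eqref{rec2} follows.
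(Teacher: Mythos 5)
Your proposal is correct and takes essentially the same route as the paper: \eqref{rec1} and \eqref{rec3} follow by partitioning the strictly increasing index tuples of \eqref{tdset} according to the maximal consecutive suffix (respectively the largest entry), and \eqref{rec2} follows from the signed multiset of tuples with trailing copies of $n$ appended, cancelled in pairs by a sign-reversing matching. Your write-up merely makes explicit two points the paper leaves implicit---that the weight of a padded sequence depends only on the sequence itself, and that the pairing toggling the topmost $n$ is a fixed-point-free involution on exactly the sequences containing an $n$.
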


\begin{corollary} The following recurrence relations for $S_{s,h,q}[n,k]$ hold
\begin{align}
S_{s,h,q}[n,k] &= \sum_{j=k}^n h^{n-j} q^{s(j-1)-(s-1)(k - 1)} S_{s,h,q}[j-1,k-1] \prod_{i=j}^{n-1} [si-(s-1)k]_q\label{recx}\\
S_{s,h,q}[n,k] &= \sum_{j=k}^n (-h)^{j-k} q^{sn(k-j-1)+(s-1)\binom{j+1}{2}-(s-1)\binom{k}{2}} S_{s,h,q}[n+1,j+1] \prod_{i=k+1}^j [sn-(s-1)i]_q\label{recy}\\
S_{s,h,q}[n,k] &= \sum_{j=0}^k h q^{j(sn-(s-1)k)-\binom{j+1}{2}}[s(n-j-1)-(s-1)(k-j)]_q S_{s,h,q}[n-j-1,k-j]\,.\label{recz}
\end{align}
\begin{proof} We use Theorem \ref{big2} with $v_i=[si]_{1/q}$ and $w_i=-[(s-1)i]_{1/q}$, and the observation that for any integers $b,c$, we have $[b-c]_q=q^{b-1}([b]_{1/q}-[c]_{1/q})$. \end{proof} \end{corollary}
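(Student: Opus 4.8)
The plan is to leverage the dictionary, already established in the discussion following \eqref{bnkrec}, between the numbers $S_{s,h,q}[n,k]$ and the general sequence $A^{\textnormal{\textbf{v}},\textnormal{\textbf{w}}}_{n,k}$ under the weights $v_i=[si]_{1/q}$ and $w_i=-[(s-1)i]_{1/q}$. For this choice one has $A^{\textnormal{\textbf{v}},\textnormal{\textbf{w}}}_{n,k} = (h/q)^{-(n-k)} q^{(s-1)\binom{k}{2}-s\binom{n}{2}} S_{s,h,q}[n,k]$, which I invert to read $S_{s,h,q}[n,k] = (h/q)^{n-k} q^{s\binom{n}{2}-(s-1)\binom{k}{2}} A^{\textnormal{\textbf{v}},\textnormal{\textbf{w}}}_{n,k}$. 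The strategy for each of \eqref{recx}, \eqref{recy}, \eqref{recz} is identical: start from the corresponding identity \eqref{rec1}, \eqref{rec2}, \eqref{rec3} of Theorem \ref{big2}, substitute this conversion for every occurrence of $A^{\textnormal{\textbf{v}},\textnormal{\textbf{w}}}$, rewrite each linear factor $v_i+w_j=[si]_{1/q}-[(s-1)j]_{1/q}$ via $[b]_{1/q}-[c]_{1/q}=q^{1-b}[b-c]_q$ (the stated observation $[b-c]_q=q^{b-1}([b]_{1/q}-[c]_{1/q})$ solved for the difference), and finally cancel the common prefactor and collect the surviving powers of $h$ and $q$.

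Carrying out the first case, \eqref{rec1}, the factor $v_i+w_k$ becomes $q^{1-si}[si-(s-1)k]_q$, producing the product $\prod_{i=j}^{n-1}[si-(s-1)k]_q$ together with the scalar $q^{\sum_{i=j}^{n-1}(1-si)}$, while $A^{\textnormal{\textbf{v}},\textnormal{\textbf{w}}}_{j-1,k-1}$ converts to a multiple of $S_{s,h,q}[j-1,k-1]$. The $h$-powers combine to $h^{n-j}$ as required, and the residual $q$-exponent is checked to equal $s(j-1)-(s-1)(k-1)$; this verification rests only on the elementary identities $\binom{k-1}{2}-\binom{k}{2}=-(k-1)$, $\sum_{i=j}^{n-1} i=\binom{n}{2}-\binom{j}{2}$, and $\binom{j}{2}-\binom{j-1}{2}=j-1$. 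The cases \eqref{rec3}, giving \eqref{recz} (where $v_{n-j-1}+w_{k-j}$ becomes $q^{1-s(n-j-1)}[s(n-j-1)-(s-1)(k-j)]_q$), and \eqref{rec2}, giving \eqref{recy} (where $v_n+w_i=q^{1-sn}[sn-(s-1)i]_q$ and the sign $(-1)^{j-k}$ merges with the $h$-power into $(-h)^{j-k}$), proceed along exactly the same lines.

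There is no conceptual obstacle: once the conversion and the single $q$-identity are in place, each recurrence is a formal consequence of Theorem \ref{big2}. The only genuine work, and hence the step most prone to error, is the accurate accounting of the accumulated powers of $q$, since these arise from three separate sources — the two binomial prefactors attached to the $A$'s and the $q^{1-si}$-type factors pulled out of each linear term — and must be reconciled against the somewhat intricate exponents $q^{s(j-1)-(s-1)(k-1)}$, $q^{sn(k-j-1)+(s-1)\binom{j+1}{2}-(s-1)\binom{k}{2}}$, and $q^{j(sn-(s-1)k)-\binom{j+1}{2}}$ appearing in the statement. I would organize the bookkeeping by isolating the $h$-powers first, since they come solely from the $(h/q)^{\bullet}$ prefactors and the sign, and only afterward equate the total $q$-exponents term by term in $j$.
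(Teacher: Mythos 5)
Your proposal is correct and follows essentially the same route as the paper: the paper's proof is precisely the application of Theorem \ref{big2} with $v_i=[si]_{1/q}$, $w_i=-[(s-1)i]_{1/q}$, combined with the conversion $S_{s,h,q}[n,k]=(h/q)^{n-k}q^{s\binom{n}{2}-(s-1)\binom{k}{2}}A^{\textnormal{\textbf{v}},\textnormal{\textbf{w}}}_{n,k}$ and the identity $[b-c]_q=q^{b-1}([b]_{1/q}-[c]_{1/q})$, exactly as you describe. Your exponent bookkeeping for \eqref{recx} (and the analogous computations for \eqref{recy} and \eqref{recz}) checks out, so the only difference is that you spell out the routine verification the paper leaves implicit.
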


Another identity of interest is the one by Carlitz (see \cite[Identities (4.5) and (4.7)]{MedLer1}) which gives the following equivalent
relations between $q$-binomial coefficients and $q$-Stirling numbers of the second kind
\begin{align}
\pq{n}{k}_q &= \sum_{j=k}^n \binom{n}{j} (q-1)^{j-k} \hat{S}_q[j,k]\,.\label{carlitz}\\
(q-1)^{n-k} \hat{S}_q[n,k] &= \sum_{j=k}^n (-1)^{n-j} \binom{n}{j} \pq{j}{k}_q\,.\nonumber
\end{align}
Here, $\pq{n}{k}_q$ denotes the $q$-binomial coefficients which is defined by $\pq{n}{k}_q=\frac{[n]_q!}{[k]_q![n-k]_q!}$, where for
$n\in\mathbb N$, $[n]_q!=[n]_q[n-1]_q\cdots [2]_q[1]_q$. For generalizations of Carlitz's identity, see \cite[Theorem 2.2]{MedLer2} where they are proved by distributing weights on certain tableaux.

Identity \eqref{carlitz} can be directly derived from $A^{\textnormal{\textbf {v}}, \textnormal{\textbf{w}}}_{n,k}$ using \eqref{matfac}. Let $f_i=q^i$ and $v_i=1$ and $w_i=q^i-1$. Then, $A^{\textnormal{\textbf {0}}, \textnormal{\textbf{f}}}_{n,k} = A^{\textnormal{\textbf {v}}, \textnormal{\textbf{w}}}_{n,k} = \sum_{j=k}^n A^{\textnormal{\textbf {1}}, \textnormal{\textbf{0}}}_{n,j} A^{\textnormal{\textbf {0}}, \textnormal{\textbf{w}}}_{j,k}$. It suffices to note that $A^{\textnormal{\textbf {0}}, \textnormal{\textbf{f}}}_{n,k}=\pq{n}{k}_q$, $A^{\textnormal{\textbf {1}}, \textnormal{\textbf{0}}}_{n,j}=\binom{n}{j}$ and (using $q^i-1=(q-1)[i]_q$), $A^{\textnormal{\textbf {0}}, \textnormal{\textbf{w}}}_{j,k}=(q-1)^{j-k}\hat{S}_q[j,k]$ by the symmetric functions forms of these numbers.

The next theorem gives a generalization of Carlitz's identity. Equivalent identities may be obtained by manipulating the matrices involved using the matrix formulation of identities \eqref{matfac} and \eqref{orth}.

\begin{theorem}\label{carltheorem} Let $c$ and $d$ be constants, and $\textnormal{\textbf v}^*, \textnormal{\textbf w}^*, \textnormal{\textbf v}, \textnormal{\textbf w}$ be weight functions such that $v_i=c+v^*_i,w_i=d+w^*_i$. Then,
\begin{align}
A^{\textnormal{\textbf {v}}, \textnormal{\textbf{w}}}_{n,k} &= \sum_{k\leq t_1 \leq t_2 \leq t_3 \leq n} \binom{t_3}{t_2}\binom{t_2}{t_1} c^{t_3-t_2} d^{t_2-t_1} A^{\textnormal{\textbf {v}}^*, \textnormal{\textbf{0}}}_{n,t_3}
A^{\textnormal{\textbf {0}}, \textnormal{\textbf{w}}^*}_{t_1,k} \label{carl1}\\
A^{\textnormal{\textbf {v}}^*,\textnormal{ \textbf{w}}^*}_{n,k} &= \sum_{k\leq t_1 \leq t_2 \leq t_3 \leq n} \binom{t_3}{t_2}\binom{t_2}{t_1} (-c)^{t_3-t_2} (-d)^{t_2-t_1} A^{\textnormal{\textbf {v}}, \textnormal{\textbf{0}}}_{n,t_3} A^{\textnormal{\textbf {0}}, \textnormal{\textbf{w}}}_{t_1,k}\,.\label{carl2}
\end{align}
\begin{proof} Let $\textbf c$ and $\textbf d$ denote the constant functions equal to $c$ and $d$, respectively. By repeated application of \eqref{matfac}, $\left[A^{\textnormal{\textbf {v}}, \textnormal{\textbf{w}}}_{n,k} \right]=\left[A^{\textnormal{\textbf {v}},
\textnormal{\textbf{0}}}_{n,k}\right]\left[A^{\textnormal{\textbf {0}}, \textnormal{\textbf{w}}}_{n,k} \right]=\left[A^{\textnormal{\textbf{v}}^*, \textnormal{\textbf{c}}}_{n,k} \right]\left[A^{\textnormal{\textbf {d}},
\textnormal{\textbf{w}}^*}_{n,k}\right]=\left[A^{\textnormal{\textbf {v}}^*, \textnormal{\textbf{0}}}_{n,k}\right]
\left[c^{n-k}\binom{n}{k}\right]\left[d^{n-k}\binom{n}{k}\right]\left[A^{\textnormal{\textbf {0}}, \textnormal{\textbf{w}}^*}_{n,k}\right]$, which proves \eqref{carl1}. Identity \eqref{carl2} is similarly proved. \end{proof}\end{theorem}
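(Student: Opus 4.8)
The plan is to obtain \eqref{carl1} as the $(n,k)$ entry of a product of four lower-triangular matrices, built by applying the factorization \eqref{matfac} repeatedly after first isolating the constants $c$ and $d$ into their own factors. Two auxiliary facts are needed beforehand: a shift identity that absorbs a constant weight into the opposite slot, and an evaluation of the single-constant-weight arrays as scaled binomial coefficients.

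For the shift identity I would argue directly from \eqref{ankrec} that $A^{\textbf{v},\textbf{0}}_{n,k}=A^{\textbf{v}^*,\textbf{c}}_{n,k}$ and $A^{\textbf{0},\textbf{w}}_{n,k}=A^{\textbf{d},\textbf{w}^*}_{n,k}$, where $\textbf c,\textbf d$ are the constant functions equal to $c,d$. This holds because both the recurrence \eqref{ankrec} and the initial condition $A^{\textbf{v},\textbf{w}}_{n,0}=\prod_{i=0}^{n-1}(v_i+w_0)$ involve the weight functions only through the sums $v_{n-1}+w_k$ and $v_i+w_0$; since $v^*_{n-1}+c=v_{n-1}$ and $v^*_i+c=v_i$, the arrays $A^{\textbf{v},\textbf{0}}$ and $A^{\textbf{v}^*,\textbf{c}}$ satisfy the same recurrence with identical initial data and are therefore equal, and symmetrically on the $w$ side. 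For the evaluation I would invoke the symmetric-function descriptions recorded just after Theorem \ref{symfcn}: setting all arguments equal to a constant gives $A^{\textbf{d},\textbf{0}}_{n,k}=e_{n-k}(\{d,\ldots,d\})=\binom{n}{k}d^{n-k}$ and $A^{\textbf{0},\textbf{c}}_{n,k}=h_{n-k}(\{c,\ldots,c\})=\binom{n}{k}c^{n-k}$, the elementary and complete symmetric functions on constant arguments being standard; each can alternatively be verified against \eqref{ankrec} via the Pascal recurrence for $\binom{n}{k}$.

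With these facts in hand the assembly is a chain of applications of \eqref{matfac}. First $\left[A^{\textbf{v},\textbf{w}}_{n,k}\right]=\left[A^{\textbf{v},\textbf{0}}_{n,k}\right]\left[A^{\textbf{0},\textbf{w}}_{n,k}\right]$; the shift identity rewrites the right-hand side as $\left[A^{\textbf{v}^*,\textbf{c}}_{n,k}\right]\left[A^{\textbf{d},\textbf{w}^*}_{n,k}\right]$; a further application of \eqref{matfac} to each factor, together with the evaluations above, produces
\[
\left[A^{\textbf{v},\textbf{w}}_{n,k}\right]=\left[A^{\textbf{v}^*,\textbf{0}}_{n,k}\right]\left[\binom{n}{k}c^{n-k}\right]\left[\binom{n}{k}d^{n-k}\right]\left[A^{\textbf{0},\textbf{w}^*}_{n,k}\right].
\]
Reading off the $(n,k)$ entry of this fourfold product, with summation variables $t_3,t_2,t_1$ ranging over the indices shared by consecutive factors, yields exactly \eqref{carl1}. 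Identity \eqref{carl2} then follows by running the same argument on $A^{\textbf{v}^*,\textbf{w}^*}_{n,k}$ while regarding $v^*_i=(-c)+v_i$ and $w^*_i=(-d)+w_i$, which interchanges the starred and unstarred arrays and replaces $c,d$ by $-c,-d$.

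I expect the only genuine obstacle to be bookkeeping rather than substance. One must fix the order of the four factors so that the $c$-binomial lies to the left of the $d$-binomial, and then match the nested range $k\le t_1\le t_2\le t_3\le n$ together with the weights $\binom{t_3}{t_2}c^{t_3-t_2}$ and $\binom{t_2}{t_1}d^{t_2-t_1}$ to the claimed sum. It is worth noting explicitly that every array involved is lower triangular, so the intermediate indices are automatically confined to this range and the matrix product is unambiguous.
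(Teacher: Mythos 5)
Your proposal is correct and takes essentially the same approach as the paper: you build the identical matrix chain $\left[A^{\textnormal{\textbf{v}},\textnormal{\textbf{w}}}_{n,k}\right]=\left[A^{\textnormal{\textbf{v}}^*,\textnormal{\textbf{0}}}_{n,k}\right]\left[c^{n-k}\binom{n}{k}\right]\left[d^{n-k}\binom{n}{k}\right]\left[A^{\textnormal{\textbf{0}},\textnormal{\textbf{w}}^*}_{n,k}\right]$ by repeated application of \eqref{matfac} and dispose of \eqref{carl2} by the same sign-flipped symmetry the paper invokes. The only difference is that you explicitly justify the two facts the paper uses tacitly, namely the invariance of $A^{\textnormal{\textbf{v}},\textnormal{\textbf{w}}}_{n,k}$ under shifting a constant between the two weight functions and the evaluation of the constant-weight arrays as $\binom{n}{k}c^{n-k}$ and $\binom{n}{k}d^{n-k}$, and both justifications are correct.
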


As a corollary, we obtain the following identity which expresses $S_{s,h,q}[n,k]$ in terms of $q$-binomial coefficients.

\begin{corollary} Let $q\neq 1$. The numbers $S_{s,h,q}[n,k]$ may be written as
\begin{multline*}\label{exp2} S_{s,h,q}[n,k]= h^{n-k} (1-q)^{k-n}
\sum_{\substack{ k\leq t_1 \leq t_3 \leq n}} (-1)^{t_3-t_2+t_1-k} \\ q^{(s - 1)\binom{t_1-k}{2} - (s-1)\binom{t_1}{2} + s\binom{t_3}{2}}
\binom{t_3}{t_2} \binom{t_2}{t_1} \pq{n}{t_3}_{q^s} \pq{t_1}{k}_{q^{s-1}}\,.
\end{multline*}
\begin{proof}Let $Q=1/q$, $v^*_i=Q^{si}-1$, $w^*_i=-(Q^{(s-1)i}-1)$, $c=1,d=-1$, $v_i=Q^{si},w_i=-Q^{(s-1)i}$. Observe that $v^*_i=(Q-1)[si]_Q$ and $w^*_i=(Q-1)[(s-1)i]_Q$. Hence, by \eqref{carl2},
\begin{equation*}
(Q-1)^{n-k} A^{\textnormal{\textbf {v}}^*,\textnormal{ \textbf{w}}^*}_{n,k} = \sum_{k\leq t_1 \leq t_2 \leq t_3 \leq n} \binom{t_3}{t_2} \binom{t_2}{t_1} (-1)^{t_3-t_2+t_1-k} Q^{s\binom{n-t3}{2}} \pq{n}{t_3}_{Q^s} \pq{t_1}{k}_{Q^{s-1}}\,.
\end{equation*}
Finally, we use the property $\pq{n}{k}_{1/p}=\pq{n}{k}_p p^{-\binom{n}{2}+\binom{k}{2}+\binom{n-k}{2}}$. \end{proof} \end{corollary}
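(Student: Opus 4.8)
The plan is to specialize the generalized Carlitz identity \eqref{carl2} to the weight functions that encode $S_{s,h,q}[n,k]$, and to recognize the two resulting ``pure'' factors as $q$-binomial coefficients. Following the hypotheses of Theorem \ref{carltheorem}, I would set $Q=1/q$ and choose $v_i=Q^{si}$, $w_i=-Q^{(s-1)i}$, $c=1$, $d=-1$, $v^*_i=Q^{si}-1$, $w^*_i=-(Q^{(s-1)i}-1)$, which indeed satisfy $v_i=c+v^*_i$ and $w_i=d+w^*_i$. Since $(-c)^{t_3-t_2}=(-1)^{t_3-t_2}$ and $(-d)^{t_2-t_1}=1$, identity \eqref{carl2} then writes $A^{\textbf{v}^*,\textbf{w}^*}_{n,k}$ as a sum over $k\le t_1\le t_2\le t_3\le n$ of $(-1)^{t_3-t_2}\binom{t_3}{t_2}\binom{t_2}{t_1}A^{\textbf{v},\textbf{0}}_{n,t_3}A^{\textbf{0},\textbf{w}}_{t_1,k}$.

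Next I would evaluate the two factors using the elementary and complete symmetric function descriptions noted after Theorem \ref{symfcn}. Since $A^{\textbf{v},\textbf{0}}_{n,t_3}=e_{n-t_3}(\{1,Q^s,\dots,Q^{(n-1)s}\})$ and $A^{\textbf{0},\textbf{w}}_{t_1,k}=h_{t_1-k}(\{-1,-Q^{s-1},\dots,-Q^{k(s-1)}\})$, the $q$-binomial theorem $e_m(\{1,x,\dots,x^{N-1}\})=x^{\binom{m}{2}}\pq{N}{m}_x$ and its complete-homogeneous dual $h_m(\{1,x,\dots,x^{N}\})=\pq{m+N}{m}_x$ give $A^{\textbf{v},\textbf{0}}_{n,t_3}=Q^{s\binom{n-t_3}{2}}\pq{n}{t_3}_{Q^s}$ and $A^{\textbf{0},\textbf{w}}_{t_1,k}=(-1)^{t_1-k}\pq{t_1}{k}_{Q^{s-1}}$; the extra sign merges into $(-1)^{t_3-t_2+t_1-k}$, and the summand becomes a product of binomials in bases $Q^s$ and $Q^{s-1}$.

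To return to $S_{s,h,q}[n,k]$ I would use $v^*_i=(Q-1)[si]_{1/q}$ and $w^*_i=-(Q-1)[(s-1)i]_{1/q}$, so that $\textbf{v}^*$ and $\textbf{w}^*$ are $(Q-1)$ times the weights $[si]_{1/q}$ and $-[(s-1)i]_{1/q}$. By the homogeneity of \eqref{tdset} each such scaling multiplies $A$ by $(Q-1)^{n-k}$, hence $A^{\textbf{v}^*,\textbf{w}^*}_{n,k}=(Q-1)^{n-k}(h/q)^{-(n-k)}q^{(s-1)\binom{k}{2}-s\binom{n}{2}}S_{s,h,q}[n,k]$, using the relation recorded at the start of this section. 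With $Q-1=(1-q)/q$ the powers of $q$ in $(Q-1)^{n-k}(h/q)^{-(n-k)}$ cancel and leave $h^{n-k}(1-q)^{k-n}$, which is the claimed prefactor once the expression is solved for $S_{s,h,q}[n,k]$.

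The main obstacle, and the only step requiring care, is the final bookkeeping of the exponent of $q$. Converting the $1/q^s$- and $1/q^{s-1}$-binomials to base $q$ via $\pq{n}{k}_{1/p}=\pq{n}{k}_p\,p^{-\binom{n}{2}+\binom{k}{2}+\binom{n-k}{2}}$ and combining the outcome with the factor $Q^{s\binom{n-t_3}{2}}=q^{-s\binom{n-t_3}{2}}$ and with the $q^{s\binom{n}{2}-(s-1)\binom{k}{2}}$ from the $A$-to-$S$ normalization, I expect all terms in $\binom{n}{2}$, $\binom{k}{2}$ and $\binom{n-t_3}{2}$ to cancel, leaving precisely $q^{(s-1)\binom{t_1-k}{2}-(s-1)\binom{t_1}{2}+s\binom{t_3}{2}}$. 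Keeping the three summation indices and the two distinct bases $q^s$ and $q^{s-1}$ separate is the fiddly part, but no new idea beyond this calculation is needed.
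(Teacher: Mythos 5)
Your proposal is correct and follows the paper's own proof essentially step for step: the same choice $Q=1/q$, $v_i=Q^{si}$, $w_i=-Q^{(s-1)i}$, $c=1$, $d=-1$ fed into \eqref{carl2}, the identification of $A^{\textnormal{\textbf{v}},\textnormal{\textbf{0}}}_{n,t_3}$ and $A^{\textnormal{\textbf{0}},\textnormal{\textbf{w}}}_{t_1,k}$ with $q$-binomials via their symmetric-function forms, the scaling relation $A^{\textnormal{\textbf{v}}^*,\textnormal{\textbf{w}}^*}_{n,k}=(Q-1)^{n-k}(h/q)^{-(n-k)}q^{(s-1)\binom{k}{2}-s\binom{n}{2}}S_{s,h,q}[n,k]$, and the final base-reversal identity $\pq{n}{k}_{1/p}=\pq{n}{k}_p\,p^{-\binom{n}{2}+\binom{k}{2}+\binom{n-k}{2}}$. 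You merely make explicit several steps the paper leaves implicit (the elementary/complete symmetric function evaluations, the homogeneity argument, and the exponent bookkeeping, which does cancel as you predict), and you even correct a small sign typo in the paper's stated $w^*_i=(Q-1)[(s-1)i]_Q$, which should read $w^*_i=-(Q-1)[(s-1)i]_Q$.
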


\section{Explicit Formulas}\label{second}

Let $\textbf r=(r_1,r_2,\ldots,r_n), \textbf s=(s_1,s_2,\ldots,s_n)$. We will use $|\cdot|$ to denote the sum of the elements of a finite sequence. We now consider the normal ordering of strings of the form $H_{\textnormal{\textbf{r}},\textnormal{\textbf{s}}}=V^{r_n}U^{s_n}\cdots V^{r_2}U^{s_2}V^{r_1}U^{s_1}$ which can be written in the form
\begin{equation}\label {wnorm}
H_{\textnormal{\textbf{r}},\textnormal{\textbf{s}}} = \sum_
{k=s_1}^{|\textbf s|} S^{\textbf r, \textbf s}_{s,h,q}[k] V^{|\textbf r| - (|\textbf s|-k)(1-s)} U^k\,.
\end{equation}
In this section, we obtain explicit formulas for the numbers $S^{\textbf r, \textbf s}_{s,h,q}[k]$. 

Our first identity is a $q$-analogue of the following explicit formula for $S^{\textnormal{\textbf{r}}, \textnormal{\textbf{s}}}_{0,1,1}(k)=S^{\textbf r,\textbf s}(k)$ which was computed by El-Desouky, Caki\'c and Mansour \cite{DesCakMan} by repeated application of the Leibniz formula
\begin{equation}\label{des}
S^{\textbf r,\textbf s}(k) = \sum_{j_1+\cdots+j_{n-1}=s_1+\cdots+s_n-k}
\prod_{i=1}^{n-1} \binom{s_{i+1}}{j_i} \left( r_1+\cdots+ r_i - (j_1+\cdots+j_{i-1})\right)^{(\underline{j_i})}
\end{equation}
where $x^{(\underline{j})} = x(x-1)\cdots(x-(j-1))$ denotes the falling factorial. Our proof uses a rook theoretic interpretation of normal ordering. This interpretation was pointed out by Varvak \cite[Section 7]{Var} and is similar to the row creation rule introduced by Goldman and Haglund \cite{GolHag}.

The process of computing the normal ordering of a string $w$ can be considered as forming the collection of finite sequences of two operations, namely the conversion $UV$ to $qVU$ and $UV$ to $hV^s$, starting from the rightmost $UV$, such that successive applications of each element in the sequence produces a different string, until a string in normally ordered form is obtained. Let us denote the two operations by $\alpha$ and $\beta$, respectively. Applying a sequence containing $k$ $\beta$'s results to an expression $c V^{|\textbf r|-k(1-s)} U^{|\textbf s|-k}$. The sum of all such coefficients $c$ is then the coefficient of $V^{|\textbf r|-k(1-s)} U^{|\textbf s|-k}$ in the normal ordering of $H_{\textnormal{\textbf{r}},\textnormal{\textbf{s}}}$. The process we just described can be translated in terms of rook placements on a Ferrers board (or simply, board).

Let $U$ correspond to a horizontal step and $V$ a vertical step. Then $w$ outlines a board which we denote by $B(w)$. The conversion $\alpha$ corresponds to leaving a cell empty while the conversion $\beta$ corresponds to placing a rook on a cell such that the rook cancels all cells on top of it and divides the row to its left into $s$ rows. Note that if $s=0$, then a placement of a rook converts a row into zero rows, or equivalently, it cancels all the cells to its left. We denote a rook by marking a cell with ``$\bullet$'' and a canceled cell with ``$\times$''. Since the $\beta$'s convert the rightmost $UV$, the rooks are placed in some chosen columns from right to left. We can think of a cell lying in a divided row as containing \emph{subcells}, with each cell containing 1 subcell by default. A canceled cell is a assigned the weight 1 while a cell containing a rook is assigned the weight $h$. All other cells are assigned the weight $q^a$, where $a$ is the number of subcells in a cell. The weight of the rook placement is the product of the weight of the cells. Finally, the normally ordered string resulting from a particular rook placement is $\omega V^iU^{j}$, where $\omega$ is the weight of the rook placement, $i$ is the number of rows in the leftmost column plus $s-1$ if the leftmost column contains a rook, and $j$ is the number of columns not containing rooks. 
Note that the original model by Goldman and Haglund \cite{GolHag} involves creating $s$ new rows to the left of a cell containing a rook and then canceling the original row. The model we described involves dividing the row into $s$ rows, ie., creating $s-1$ new rows without canceling the original row. It is apparent that these models are equivalent, the only other notable difference being the orientation of the boards. Alternatively, we also say the placement of a rook in a cell adds $s-1$ subcells to every cell lying to its left. Let us call the rook placement rule we just described as the \emph{row creation rule}.

Figure \ref{board} shows a rook placement on $B(w)$, where $w=V^2U^3V^3U^2,s=2$. This placement corresponds to applying the sequence
$(\alpha,\alpha,\alpha,\alpha,\alpha,\beta,\alpha,\alpha,\beta,\alpha)$ from right to left. The string $h^2q^8 V^7U^3$ is produced.

\begin{figure}[htbp]
\begin{center}
\begin{tabular}{|c|c|c|c|c|}
\cline{1-5}
\phantom{h} & $\times$ & $\times$ & \multicolumn{1}{c}{} & \multicolumn{1}{c}{} \\

\cline{1-3}
& \multirow{2}{*}{$\bullet$} & \multirow{3}{*}{$\bullet$} & \multicolumn{1}{c}{} & \multicolumn{1}{c}{} \\

\cline{1-1}
& & & \multicolumn{1}{c}{} & \multicolumn{1}{c}{} \\

\cline{1-2}
& &  & \multicolumn{1}{c}{} & \multicolumn{1}{c}{} \\

\cline{1-3}
\multicolumn{1}{|c}{} & \multicolumn{1}{c}{} & \multicolumn{1}{c}{} & \multicolumn{1}{c}{} & \multicolumn{1}{c}{} \\

\multicolumn{1}{|c}{} & \multicolumn{1}{c}{} & \multicolumn{1}{c}{} & \multicolumn{1}{c}{} & \multicolumn{1}{c}{}
\end{tabular}
\caption{}
\end{center}
\label{board}
\end{figure}

Let $B$ be a board. Denote by $C_k(B;s)$ the collection of all placements of $k$ rooks on $B$ under the row creation rule. For a rook placement $\phi\in C_k(B,s)$, denote the weight of $\phi$ by $\omega(\phi)$. We define the \emph{rook number} $R_{s,h,q}[B,k]$ by
\begin{equation}\label{rooknumber1}
R_{s,h,q}[B,k] = \sum_{\phi\in C_k(B;s)} \omega(\phi)\,.
\end{equation}
One sees that the number of rooks completely determines the exponents of $V$ and $U$ in the word resulting from the rook placement. In particular, if $k$ rooks are placed, then $k$ columns are cancelled and $k(s-1)$ rows are added. Hence, we can write $H_{\textnormal{\textbf{r}},\textnormal{\textbf{s}}}$ as 
\begin{equation}\label{rooknorm}
H_{\textnormal{\textbf{r}},\textnormal{\textbf{s}}} = \sum_ {k=0}^{|\textbf s|-s_1} R_{s,h,q}[B(H_{\textnormal{\textbf{r}},\textnormal{\textbf{s}}}),k] V^{|\textbf r|-k(1-s)} U^{|\textbf s|-k}\,.
\end{equation}
Comparing this with \eqref{wnorm} gives $S^{\textnormal{\textbf{r}}, \textnormal{\textbf{s}}}_{s,h,q}[k] =
R_{s,h,q}[B(H_{\textnormal{\textbf{r}},\textnormal{\textbf{s}}}),|\textbf s|-k]$. It is important to note that $R_{s,h,q}[B(H_{\textnormal{\textbf{r}},\textnormal{\textbf{s}}}),|\textbf s|-k]=0$ when $k>|\textbf s|-s_1$ since the number of rooks cannot exceed the number of rows of positive length. This is also reflected in the fact that the exponent of $U$ in the normal ordering of $H_{\textnormal{\textbf{r}},\textnormal{\textbf{s}}}$ is between $s_1$ and $|\textbf s|$, or equivalently, that $S^{\textnormal{\textbf{r}}, \textnormal{\textbf{s}}}_{s,h,q}[k]=0$ when $k<s_1$ or $k>|\textbf s|$.

The following lemma will be used in deriving an analogue of \eqref{des}. An equivalent formula was derived in \cite{ManSch} using a different method.

\begin{lemma}\label{lemrep} We have
\begin{equation*}
U^{s'}V^{r'} = \sum_{j=0}^{s'} \left( h^jq^{r'(s'-j)} \pq{s'}{j}_{q^{s-1}} \prod_{i=0}^{j-1} [r'+i(s-1)]_q \right)V^{r'+j(s-1)} U^{s'-j}\,.
\end{equation*}
\begin{proof} Suppose that $j$ columns have been chosen where rooks will be placed. If the first rook is placed on the cell in the $i$th row, $1\leq i\leq r'$, then the cells below the rook will contribute a weight of $q^{i-1}$. As $i$ varies, a total weight of $h(1+q+\cdots+q^{r'-1})=h[r']_q$ will be contributed by all possible placements of the first rook. Since the placement of the first rook adds $s-1$ subcells to every cell to its left, the total weight contributed by all possible placement of the second rook is $[r'+(s-1)]_q$. Continuing this process with the other columns, we see that the weight contributed by all possible placements of $j$ rooks in the chosen columns is $h^j\prod_{i=0}^{j-1} [r+i(s-1)]_q$, and that this weight is the same for any choice of $j$ columns.

We now consider the weight contributed by the other columns in which no rooks are placed. For such a column, the weight is completely determined by the number of columns to its right that contains a rook, i.e., if there are $t$ columns to its right containing a rook, then the column will assume a weight of $q^{r'+t(s-1)}$. Note that $t$ varies from $0$ to $j$ and that for a given placement of $j$	 rooks, the weight contributed by all the columns containing no rooks is $q^{r't_0} q^{(r'+(s-1))t_1} q^{(r'+2(s-1))t_2} \cdots q^{(r'+j(s-1))t_j}$ for some $t_0+t_1+\cdots+t_j=s'-j$. Summing this up on all such possible collections $\{t_0,t_1,\ldots,t_i\}$, we have
\begin{align*}
\sum_{t_0+t_1+\cdots+t_j=s'-j} &q^{r't_0} q^{(r'+(s-1))t_1} q^{(r'+2(s-1))t_2} \cdots q^{(r'+j(s-1))t_j} \\
&= q^{r'(s'-j)} \sum_{t_0+t_1+\cdots+t_j=s'-j} q^{0(s-1)t_0} q^{1(s-1)t_1} q^{2(s-1)t_2} \cdots q^{j(s-1)t_j}\\ &= q^{r'(s'-j)} \pq{s'}{j}_{q^{s-1}}\,,
\end{align*}
by \eqref{tsetalt} with $v_i=0$ and $w_i=q^{i(s-1)}$. This proves the lemma. \end{proof}\end{lemma}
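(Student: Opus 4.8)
The plan is to prove the identity for $U^{s'}V^{r'}$ by translating the left-hand side into a rook placement problem on the rectangular board $B(U^{s'}V^{r'})$, which has $r'$ rows and $s'$ columns, and then summing the weights according to the row creation rule. Since normal ordering of $U^{s'}V^{r'}$ is governed by the same $\alpha,\beta$ operations described above, the coefficient of $V^{r'+j(s-1)}U^{s'-j}$ is exactly $R_{s,h,q}[B(U^{s'}V^{r'}),j]$, so it suffices to compute this rook number explicitly and show it equals $h^jq^{r'(s'-j)}\pq{s'}{j}_{q^{s-1}}\prod_{i=0}^{j-1}[r'+i(s-1)]_q$.

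The key observation is that on a rectangular board the weight of a placement of $j$ rooks factors cleanly into two independent contributions, so the first step is to fix a choice of $j$ columns to receive rooks and compute the weight contributed by those columns alone. Because the rooks are placed from right to left and each rook adds $s-1$ subcells to every cell on its left, I would argue inductively that the total weight summed over all vertical positions of the rooks in the chosen columns is $h^j\prod_{i=0}^{j-1}[r'+i(s-1)]_q$: the first rook placed ranges over $r'$ rows giving $h[r']_q$, and the $(i+1)$st rook sees a row that has been lengthened to $r'+i(s-1)$ subcells, giving a factor $[r'+i(s-1)]_q$. The crucial point here, which makes the factorization work, is that this column-contribution is independent of which $j$ columns were selected.

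The second step is to account for the $s'-j$ empty columns. For an empty column the weight is $q$ raised to the number of subcells it contains, and this depends only on how many rook-columns lie to its right, say $t$, giving weight $q^{r'+t(s-1)}$. Assigning to each empty column its count $t\in\{0,1,\ldots,j\}$ and letting $t_i$ be the number of empty columns having exactly $i$ rook-columns to their right, the aggregate weight over all column choices becomes $\sum_{t_0+\cdots+t_j=s'-j}\prod_i q^{(r'+i(s-1))t_i}$, which I would factor as $q^{r'(s'-j)}$ times a sum recognizable, via \eqref{tsetalt} with $v_i=0$ and $w_i=q^{i(s-1)}$, as $h_{s'-j}(\{1,q^{s-1},\ldots,q^{j(s-1)}\})=\pq{s'}{j}_{q^{s-1}}$. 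Multiplying the two contributions and summing over $j$ completes the proof.

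The main obstacle I anticipate is verifying that the empty-column sum really is the complete homogeneous symmetric function in the prescribed arguments, and in particular matching the bookkeeping of ``number of rook-columns to the right'' with the weak-composition index in \eqref{tsetalt}; this requires care because the indices $t_i$ simultaneously encode which column choice is made and how many empty columns sit in each gap. Once that correspondence is pinned down, the identification with $A^{\textnormal{\textbf{0}},\textnormal{\textbf{w}}}_{s',j}=h_{s'-j}(\{w_0,\ldots,w_j\})$ via the remark following Theorem \ref{symfcn} is immediate, and the remaining algebra is routine.
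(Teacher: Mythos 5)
Your proposal is correct and follows essentially the same route as the paper's proof: both decompose a placement of $j$ rooks on the rectangular board into the contribution of the rook-bearing columns, shown to be $h^j\prod_{i=0}^{j-1}[r'+i(s-1)]_q$ independently of which columns are chosen, and the contribution of the empty columns, indexed by the number of rook-columns to their right and summed via \eqref{tsetalt} with $v_i=0$, $w_i=q^{i(s-1)}$ to give $q^{r'(s'-j)}\pq{s'}{j}_{q^{s-1}}$. The only cosmetic difference is that you make the rook-number interpretation $R_{s,h,q}[B(U^{s'}V^{r'}),j]$ and the complete-homogeneous-symmetric-function identification explicit, which the paper leaves implicit.
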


\begin{theorem}\label{theoremrep} The string $H_{\textnormal{\textbf{r}},\textnormal{\textbf{s}}}$ may be written as \begin{multline}\label{longsum}
H_{\textnormal{\textbf{r}},\textnormal{\textbf{s}}} = \sum_{j_1=0}^{s_2} \sum_{j_2=0}^{s_3} \cdots \sum_{j_{n-1}=0}^{s_n} \prod_{i=1}^{n-1}
h^{j_1+\cdots+j_{n-1}} \Gamma_{q,s} [j_i,r_1+\cdots+r_i+(j_1+\cdots+j_{i-1})(s-1),s_{i+1}] \\ V^{r_1+\cdots+r_n + (j_1+\cdots+j_{n-1})(s-1)}
U^{s_1+\cdots+s_n-(j_1+\cdots+j_{n-1})}\,.
\end{multline}
where
\begin{equation*} \Gamma_{q,s}[j,r',s'] = q^{r'(s'-j)} \pq{s'}{j}_{q^{s-1}}
\prod_{i=0}^{j-1} [r'+i(s-1)]_q\,.
\end{equation*}
Hence, the numbers $S^{\textnormal{\textbf{r}}, \textnormal{\textbf{s}}}_{s,h,q}[k]$ are given by
\begin{equation}\label{exp22}
S^{\textnormal{\textbf{r}}, \textnormal{\textbf{s}}}_{s,h,q}[k] =
h^{|\textnormal{\textbf{s}}|-k}\sum_{j_1+\cdots+j_{n-1}=s_1+\cdots+s_n-k}~\prod_{i=1}^{n-1 } \Gamma_{q,s}
[j_i,r_1+\cdots+r_i+(j_1+\cdots+j_{i-1})(s-1),s_{i+1}]\,.
\end{equation}
\begin{proof} Identity \eqref{longsum} is proved by repeated application of \eqref{lemrep} beginning from $U^{s_2}V^{r_1}$. Identity \eqref{exp22} follows by comparing the coefficient of $U^k$ in \eqref{longsum} and \eqref{wnorm}. \end{proof} \end{theorem}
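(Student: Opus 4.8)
The plan is to derive \eqref{longsum} by iterating Lemma \ref{lemrep} exactly $n-1$ times, once for each junction $U^{s_{i+1}}V^{\cdots}$ where a block of $U$'s abuts a block of $V$'s, processing $H_{\textnormal{\textbf{r}},\textnormal{\textbf{s}}}$ from right to left; identity \eqref{exp22} then falls out by matching the exponent of $U$ against the normal form \eqref{wnorm}. First I would abbreviate $R_i := r_1+\cdots+r_i+(j_1+\cdots+j_{i-1})(s-1)$, so that $R_i$ is precisely the middle argument of the $i$-th factor $\Gamma_{q,s}[j_i,R_i,s_{i+1}]$ in the claimed formula (with $R_1=r_1$, the inner sum being empty).

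I would establish the iteration by induction on the number $m$ of junctions already resolved, with the inductive hypothesis that after clearing the $m$ rightmost junctions the word equals
\[V^{r_n}U^{s_n}\cdots V^{r_{m+2}}U^{s_{m+2}}\sum_{j_1=0}^{s_2}\cdots\sum_{j_m=0}^{s_{m+1}} h^{j_1+\cdots+j_m}\Big(\prod_{i=1}^{m}\Gamma_{q,s}[j_i,R_i,s_{i+1}]\Big)V^{R_{m+1}'}U^{s_1+\cdots+s_{m+1}-(j_1+\cdots+j_m)},\]
where $R_{m+1}'=r_1+\cdots+r_{m+1}+(j_1+\cdots+j_m)(s-1)$. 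The base case $m=0$ is the original word $H_{\textnormal{\textbf{r}},\textnormal{\textbf{s}}}$ itself. For the step $m\to m+1$, the trailing unresolved block meets the resolved part in the factor $U^{s_{m+2}}V^{R_{m+1}'}$, to which I apply Lemma \ref{lemrep} with $s'=s_{m+2}$ and $r'=R_{m+1}'$, introducing the new summation variable $j_{m+1}$ together with the weight $h^{j_{m+1}}\Gamma_{q,s}[j_{m+1},R_{m+1}',s_{m+2}]$. Absorbing the freed $V^{r_{m+2}}$ on the left into the $V$-power and merging the $U$'s on the right then reproduces the hypothesis with $m$ replaced by $m+1$. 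Taking $m=n-1$ yields \eqref{longsum}, the powers of $h$ from the $n-1$ applications combining into the single factor $h^{j_1+\cdots+j_{n-1}}$.

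The one point that genuinely needs verification — and the only place the argument could go astray — is the matching of arguments: I must check that the accumulated exponent $R_{m+1}'$ of $V$ after resolving $m$ junctions and absorbing $r_{m+1}$ coincides exactly with the prescribed middle argument $R_{m+1}=r_1+\cdots+r_{m+1}+(j_1+\cdots+j_m)(s-1)$ of the next $\Gamma$-factor. This is where the $(s-1)$ contributions from every earlier rook placement $j_1,\ldots,j_m$ must be tallied correctly; once $R_{m+1}'=R_{m+1}$ is confirmed, each application of the lemma feeds precisely the right data into the next and the induction closes. Everything else is the routine merging of adjacent $V$-powers and of adjacent $U$-powers.

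Finally, I would deduce \eqref{exp22} from \eqref{longsum}. In \eqref{longsum} the exponent of $U$ equals $|\textnormal{\textbf{s}}|-(j_1+\cdots+j_{n-1})$, so fixing $k$ restricts the sum to those tuples with $j_1+\cdots+j_{n-1}=|\textnormal{\textbf{s}}|-k$; on this locus the $V$-exponent becomes $|\textnormal{\textbf{r}}|-(|\textnormal{\textbf{s}}|-k)(1-s)$ and the scalar $h^{j_1+\cdots+j_{n-1}}=h^{|\textnormal{\textbf{s}}|-k}$ factors out, matching the monomial and prefactor in \eqref{wnorm} and leaving exactly the stated expression for $S^{\textnormal{\textbf{r}},\textnormal{\textbf{s}}}_{s,h,q}[k]$.
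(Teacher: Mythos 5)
Your proposal is correct and is essentially the paper's own argument: the paper proves \eqref{longsum} by repeated application of Lemma \ref{lemrep} starting from the rightmost junction $U^{s_2}V^{r_1}$ and obtains \eqref{exp22} by comparing the coefficient of $U^k$ with \eqref{wnorm}, exactly as you do. Your explicit induction on the number of resolved junctions, with the check that the accumulated $V$-exponent $R_{m+1}'$ equals the prescribed argument $R_{m+1}$, simply makes precise the bookkeeping the paper leaves implicit.
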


\begin{corollary}\label{correp} The following explicit formula for $S_{s,h,q}[n,k]$ holds
\begin{equation}\label{exp3}
S_{s,h,q}[n,k] = h^{n-k} \sum_{j_1+\cdots+j_{n-1}=n-k}~\prod_{i=1}^{n-1} q^{(i+(j_1+\cdots+j_{i-1})(s-1))(1-j_i)} \pq{i+(j_1+\cdots+j_{i-1})(s-1)}{j_i}_q\,.
\end{equation}
\end{corollary}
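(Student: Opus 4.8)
The plan is to specialize the string formula \eqref{exp22} to the case of the pure power $(VU)^n$, which corresponds to setting every $r_i=1$ and every $s_i=1$ in the string $H_{\textbf{r},\textbf{s}}$. Under this specialization the string $H_{\textbf{r},\textbf{s}}=V^{r_n}U^{s_n}\cdots V^{r_1}U^{s_1}$ becomes $(VU)^n$, and its normal ordering coefficients $S^{\textbf{r},\textbf{s}}_{s,h,q}[k]$ become exactly the $S_{s,h,q}[n,k]$ of Section \ref{first}. So the task reduces to chasing through how each ingredient of \eqref{exp22} simplifies when $r_i=s_i=1$.

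First I would record the simplifications. With $|\textbf{s}|=n$, the prefactor $h^{|\textbf{s}|-k}$ becomes $h^{n-k}$ and the summation condition $j_1+\cdots+j_{n-1}=|\textbf{s}|-k$ becomes $j_1+\cdots+j_{n-1}=n-k$, matching the sum in \eqref{exp3}. Next, since $r_1+\cdots+r_i=i$ when all $r_i=1$, the second argument of $\Gamma_{q,s}$ becomes $r'=i+(j_1+\cdots+j_{i-1})(s-1)$, and since $s_{i+1}=1$ the third argument is $s'=1$. The key step is then to evaluate $\Gamma_{q,s}[j_i,\,i+(j_1+\cdots+j_{i-1})(s-1),\,1]$ and show it equals the $i$th factor in \eqref{exp3}, namely $q^{(i+(j_1+\cdots+j_{i-1})(s-1))(1-j_i)}\pq{i+(j_1+\cdots+j_{i-1})(s-1)}{j_i}_q$. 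Writing $r'=i+(j_1+\cdots+j_{i-1})(s-1)$ and $s'=1$, the definition of $\Gamma_{q,s}$ gives
\begin{equation*}
\Gamma_{q,s}[j_i,r',1]=q^{r'(1-j_i)}\pq{1}{j_i}_{q^{s-1}}\prod_{\ell=0}^{j_i-1}[r'+\ell(s-1)]_q.
\end{equation*}
Here $\pq{1}{j_i}_{q^{s-1}}$ forces $j_i\in\{0,1\}$, which is consistent with the constraint $0\le j_i\le s_{i+1}=1$ already present in \eqref{longsum}, and the prefactor $q^{r'(1-j_i)}$ already matches the desired exponent of $q$.

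The remaining point, and the only mildly subtle one, is to reconcile the product $\prod_{\ell=0}^{j_i-1}[r'+\ell(s-1)]_q$ together with the $q^{s-1}$-binomial $\pq{1}{j_i}_{q^{s-1}}$ against the ordinary $q$-binomial $\pq{r'}{j_i}_q$ appearing in \eqref{exp3}. Because $j_i$ is $0$ or $1$, both sides are easy to check directly: when $j_i=0$ every product is empty and both sides equal $q^{r'}$, and when $j_i=1$ one has $\pq{1}{1}_{q^{s-1}}=1$, $\prod_{\ell=0}^{0}[r'+\ell(s-1)]_q=[r']_q$, and $\pq{r'}{1}_q=[r']_q$, so the two expressions agree. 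Thus $\Gamma_{q,s}[j_i,r',1]=q^{r'(1-j_i)}\pq{r'}{j_i}_q$ in both cases, which is precisely the $i$th factor of \eqref{exp3} with $r'=i+(j_1+\cdots+j_{i-1})(s-1)$. I expect the main obstacle to be purely bookkeeping: making sure the index $r'$ inside the $q$-binomial in \eqref{exp3} is identified correctly with $i+(j_1+\cdots+j_{i-1})(s-1)$, and verifying that the constraints $0\le j_i\le 1$ imposed by the summation ranges in \eqref{longsum} are respected so that the $\Gamma$ evaluation is legitimate. Once the per-factor identity is established, substituting it into \eqref{exp22} and taking the product over $i=1,\ldots,n-1$ yields \eqref{exp3} immediately.
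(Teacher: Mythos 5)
Your proposal is correct and is essentially the paper's own (implicit) argument: the paper states Corollary \ref{correp} without a separate proof precisely because it is the specialization $r_i=s_i=1$ of Theorem \ref{theoremrep}, which is exactly what you carry out, including the correct evaluation $\Gamma_{q,s}[j_i,r',1]=q^{r'(1-j_i)}\pq{r'}{j_i}_q$ for $j_i\in\{0,1\}$. Your observation that the sum is effectively restricted to $j_i\in\{0,1\}$ is in fact essential rather than merely consistent, since the printed factor $\pq{i+(j_1+\cdots+j_{i-1})(s-1)}{j_i}_q$ in \eqref{exp3}, unlike the factor $\pq{1}{j_i}_{q^{s-1}}$ it replaces, does not vanish for $j_i\ge 2$, so \eqref{exp3} must be read with that inherited restriction on the summation indices.
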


Varvak's \cite{Var} use of rook factorization to obtain an explicit formula adapts readily in the case of $S^{\textnormal{\textbf{r}},\textnormal{\textbf{s}}}_{s,h,q}[k]$ after some modification. We will need the following analogues of the falling factorial and factorial: for $r\in\mathbb R,j\in\mathbb N$, define $ [r]^{(\underline{j})}_{q,1-s} = [r(1-s)]_q [(r-1)(1-s)]_q \cdots [(r-j+1)(1-s)]_q$ and for $n\in\mathbb N$, define $[n]_{q,1-s}!=[n]^{(\underline{n})}_{q,1-s}$.

\begin{theorem}\label{expexp}
Let $s\neq 1$. The coefficients $S^{\textnormal{\textbf{r}}, \textnormal{\textbf{s}}}_{s,h,q}[k]$ satisfy the explicit formula
\begin{equation}
S^{\textnormal{\textbf{r}}, \textnormal{\textbf{s}}}_{s,h,q}[k] = \frac{h^{|\textnormal{\textbf{s}}|-k}}{[k]_{q,1-s}!} \sum_{j=0}^k (-1)^{k-j} q^{\binom{k-j}{2}(1-s)} \pq{k}{j}_{q^{1-s}} \Omega^{\textnormal{\textbf r},\textnormal{\textbf s}}_{s,q}[j]
\end{equation}
where
\begin{equation*}
\Omega^{\textnormal{\textbf r},\textnormal{\textbf s}}_{s,q}[j]=\prod_{t=1}^{n} [j-(s_1+s_2+\cdots+s_{t-1})+(r_1+r_2+\cdots+r_{t-1})/(1-s)]_{q,1-s}^{(\underline{s_t})}\,.
\end{equation*}
\begin{proof} We use a representation of $V,U$ as linear operators whose action on the monomial $t^j$ is given by $Vt^j=t^{j+1}$ and $Ut^j=h[n]_qt^{j+s-1}$. One can verify that these operators satisfy $VU-qVU=hV^s$ and that $U^k t^{n(1-s)} =h^k [n]_{q,1-s}^{(\underline{k})} t^{(n-k)(1-s)}$. We then apply both sides of \eqref{wnorm} to $t^{x(1-s)}$. After letting $t=1$ to the resulting equation and using $[x]^{(\underline{k})}_{q,1-s}=[1-s]_q^k [x]_{q^{1-s}}[x-1]_{q^{1-s}} \cdots[x-k+1]_{q^{1-s}}$, we obtain
\begin{equation}
h^{|\textbf s|} \Omega^{\textnormal{\textbf r},\textnormal{\textbf s}}_{s,q}[x]= \sum_{k=s_1}^{|\textbf s|} h^{k} S^{\textnormal{\textbf{r}}, \textnormal{\textbf{s}}}_{s,h,q}[k][1-s]_q^k [x]_{q^{1-s}}[x-1]_{q^{1-s}} \cdots[x-k+1]_{q^{1-s}}\,.\label{sumed} 
\end{equation}

Let $E$ denote the shift operator $EP(x)=P(x+1)$ and $\Delta_{Q}^k$ the $k$-th $Q$-difference operator defined by $\Delta_{Q}^k=(E-1)(E-Q)\cdots (E-Q^{k-1})$. If $P(x)=\sum_{k}p_k [x]_Q [x-1]_Q \cdots [x-k+1]_Q$, then $p_k=\frac{1}{[k]_Q!} \Delta_Q^k P(x) \vert_{x=0}$. By the $q$-binomial theorem, $\Delta_{Q}^k = \sum_{j=0}^k (-1)^{j} Q^{\binom{j}{2}} \pq{k}{j}_{Q} E^{k-j}$. The result then follows by letting $Q=q^{1-s}$, $p_k=h^{k} S^{\textnormal{\textbf{r}}, \textnormal{\textbf{s}}}_{s,h,q}[k][1-s]_q^k$ and $P(x)=h^{|\textbf s|} \Omega^{\textnormal{\textbf r},\textnormal{\textbf s}}_{s,q}[x]$.
 \end{proof} \end{theorem}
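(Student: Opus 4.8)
The plan is to adapt Varvak's rook-factorization argument by working with a concrete operator representation of the relation $UV-qVU=hV^s$. First I would introduce linear operators acting on the monomials $t^j$ by $Vt^j=t^{j+1}$ and $Ut^j=h[j]_q t^{j+s-1}$, and check directly that they satisfy $UV-qVU=hV^s$ by comparing the action of both sides on an arbitrary $t^j$. The point of this representation is that $U$ lowers the exponent by $1-s$ (up to the scalar $h[j]_q$), so that iterating it on the special monomials $t^{n(1-s)}$ produces a clean $q$-falling factorial,
\[
U^k t^{n(1-s)}=h^k [n]_{q,1-s}^{(\underline{k})}\, t^{(n-k)(1-s)},
\]
directly from the definition of $[\,\cdot\,]_{q,1-s}^{(\underline{k})}$ given just before the statement.

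Next I would apply both sides of the expansion \eqref{wnorm} of $H_{\textbf{r},\textbf{s}}$ to $t^{x(1-s)}$ and then set $t=1$. On the left, evaluating the string one $V^{r_t}U^{s_t}$ block at a time (each $U^{s_t}$ lowers the running exponent by $s_t(1-s)$ while contributing a product of $q$-factorial factors, and each $V^{r_t}$ raises it by $r_t$) assembles precisely $\Omega^{\textbf{r},\textbf{s}}_{s,q}[x]$ up to an overall factor $h^{|\textbf{s}|}$; this is where the shifts $-(s_1+\cdots+s_{t-1})$ and $(r_1+\cdots+r_{t-1})/(1-s)$ in the definition of $\Omega$ enter, and it is the step that uses $s\neq1$ so that dividing the accumulated $V$-exponent by $1-s$ is legitimate. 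On the right, the term indexed by $k$ contributes $h^k S^{\textbf{r},\textbf{s}}_{s,h,q}[k]\,[x]_{q,1-s}^{(\underline{k})}$, which I would rewrite via the multiplicativity $[m(1-s)]_q=[1-s]_q[m]_{q^{1-s}}$ (the special case $[ab]_q=[a]_q[b]_{q^a}$) as $[1-s]_q^k [x]_{q^{1-s}}[x-1]_{q^{1-s}}\cdots[x-k+1]_{q^{1-s}}$. The outcome is a polynomial identity in $x$ presenting $h^{|\textbf{s}|}\Omega^{\textbf{r},\textbf{s}}_{s,q}[x]$ as a linear combination of the $Q$-falling factorials with $Q=q^{1-s}$ and coefficients $h^k S^{\textbf{r},\textbf{s}}_{s,h,q}[k][1-s]_q^k$.

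The final step inverts this expansion. For a polynomial $P(x)=\sum_k p_k\,[x]_Q[x-1]_Q\cdots[x-k+1]_Q$, the coefficient $p_k$ is recovered by $p_k=\frac{1}{[k]_Q!}\,\Delta_Q^k P(x)\big|_{x=0}$, where $\Delta_Q^k=(E-1)(E-Q)\cdots(E-Q^{k-1})$ and $E$ is the shift $EP(x)=P(x+1)$; expanding the product by the $q$-binomial theorem yields $\Delta_Q^k=\sum_{j=0}^k(-1)^j Q^{\binom{j}{2}}\pq{k}{j}_Q E^{k-j}$. Taking $Q=q^{1-s}$, $P(x)=h^{|\textbf{s}|}\Omega^{\textbf{r},\textbf{s}}_{s,q}[x]$ and $p_k=h^k S^{\textbf{r},\textbf{s}}_{s,h,q}[k][1-s]_q^k$, evaluating at $x=0$ and reindexing $j\mapsto k-j$ reads off the claimed formula, after using $[1-s]_q^k[k]_{q^{1-s}}!=[k]_{q,1-s}!$ to combine the denominator into $[k]_{q,1-s}!$ (again invoking $s\neq1$ so that $[1-s]_q\neq0$). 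I expect the main obstacle to be the bookkeeping in the middle paragraph — confirming that the block-by-block evaluation of the left-hand side reproduces $\Omega^{\textbf{r},\textbf{s}}_{s,q}$ with exactly the right index shifts and powers of $[1-s]_q$ and $q$ — rather than the inversion, which is routine once the coefficient-extraction identity and the $q$-binomial expansion of $\Delta_Q^k$ are in place.
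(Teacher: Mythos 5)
Your proposal is correct and follows essentially the same route as the paper's own proof: the same operator representation $Vt^j=t^{j+1}$, $Ut^j=h[j]_qt^{j+s-1}$ (you even fix the paper's typo $[n]_q$), the same evaluation of \eqref{wnorm} on $t^{x(1-s)}$ to produce the expansion of $h^{|\textbf{s}|}\Omega^{\textnormal{\textbf r},\textnormal{\textbf s}}_{s,q}[x]$ in $Q$-falling factorials with $Q=q^{1-s}$, and the same inversion via $\Delta_Q^k$ and the $q$-binomial theorem. No gaps; the block-by-block bookkeeping you flag as the main obstacle is exactly what the paper's verification of \eqref{sumed} amounts to.
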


\begin{corollary}\label{secexp} For $s \neq 1$ or $s\neq 0$, the numbers $S_{s,h,q}[n,k]$ has the following explicit formula
\begin{equation*}
S_{s,h,q}[n,k] = \frac{h^{n-k}[s]_q^n}{[k]_{q^{1-s}}![1-s]_q^k} \sum_{j=0}^{k} (-1)^{k-j} q^{\binom{k-j}{2}(1-s)}
\pq{k}{j}_{q^{1-s}} \prod_{t=1}^n [(j/s)+t-j-1]_{q^s}\,.
\end{equation*}
When $s=0$,
\begin{equation*}
S_{0,h,q}[n,k] = \frac{h^{n-k}}{[k]_q!} \sum_{j=0}^{k} (-1)^{k-j} q^{\binom{k-j}{2}} \pq{k}{j}_{q} [j]_q^n\,.
\end{equation*} \end{corollary}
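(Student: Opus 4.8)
The plan is to obtain both formulas as specializations of Theorem~\ref{expexp} to the string $(VU)^n$. This string is exactly $H_{\textnormal{\textbf{r}},\textnormal{\textbf{s}}}$ with $\textnormal{\textbf r}=\textnormal{\textbf s}=(1,1,\ldots,1)$ (each of length $n$), so that $|\textnormal{\textbf s}|=n$ and, comparing \eqref{wnorm} for this string with the defining relation for $S_{s,h,q}[n,k]$, one has $S^{\textnormal{\textbf r},\textnormal{\textbf s}}_{s,h,q}[k]=S_{s,h,q}[n,k]$. The entire content of the corollary is then the simplification of the two ingredients $\Omega^{\textnormal{\textbf r},\textnormal{\textbf s}}_{s,q}[j]$ and $[k]_{q,1-s}!$ appearing in Theorem~\ref{expexp} under this specialization.

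First I would evaluate $\Omega^{\textnormal{\textbf r},\textnormal{\textbf s}}_{s,q}[j]$. Since every $s_t=1$, each factor $[\,\cdots\,]^{(\underline{s_t})}_{q,1-s}$ collapses to a single $q$-bracket. Using $r_1+\cdots+r_{t-1}=t-1=s_1+\cdots+s_{t-1}$, the $t$-th factor is $[(j-(t-1)+(t-1)/(1-s))(1-s)]_q$, and multiplying out the argument gives $j(1-s)+(t-1)s$. Hence $\Omega^{\textnormal{\textbf r},\textnormal{\textbf s}}_{s,q}[j]=\prod_{t=1}^{n}[j(1-s)+(t-1)s]_q$.

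Next, for $s\neq 0,1$ I would apply the factorization property $[ab]_q=[a]_q[b]_{q^a}$ (the same identity underlying the corollaries above) twice. Writing $j(1-s)+(t-1)s=s\bigl(j/s+t-j-1\bigr)$ yields $[j(1-s)+(t-1)s]_q=[s]_q\,[\,j/s+t-j-1\,]_{q^s}$, which extracts a factor $[s]_q^{\,n}$ from the product and leaves $\prod_{t=1}^{n}[\,j/s+t-j-1\,]_{q^s}$. Likewise, the identity $[x]^{(\underline{k})}_{q,1-s}=[1-s]_q^{\,k}[x]_{q^{1-s}}\cdots[x-k+1]_{q^{1-s}}$ already invoked in the proof of Theorem~\ref{expexp} (the case $a=1-s$ of the same property) gives at $x=k$ the factorization $[k]_{q,1-s}!=[1-s]_q^{\,k}\,[k]_{q^{1-s}}!$. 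Substituting both of these into the formula of Theorem~\ref{expexp} produces the first displayed formula directly.

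Finally, for $s=0$ I would set $s=0$ in the \emph{specialized} expression rather than in the rewritten one, since the factor $[s]_q$ and the denominator $[1-s]_q^{\,k}$ both degenerate there. In this case the argument computed above reduces to $\Omega^{\textnormal{\textbf r},\textnormal{\textbf s}}_{0,q}[j]=\prod_{t=1}^{n}[j]_q=[j]_q^{\,n}$, while $[k]_{q,1-s}!$ becomes $[k]_{q,1}!=[k]_q!$, and $q^{\binom{k-j}{2}(1-s)}=q^{\binom{k-j}{2}}$ with $\pq{k}{j}_{q^{1-s}}=\pq{k}{j}_q$; inserting these into Theorem~\ref{expexp} gives the second formula at once. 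The only step requiring genuine care is the index bookkeeping when pulling $[s]_q$ out of each factor of the product, together with keeping the two degenerate values $s=0$ and $s=1$ segregated so that neither a division by $[1-s]_q$ nor an extraction of $[s]_q$ is performed where it is illegitimate; everything else is routine use of the bracket factorization.
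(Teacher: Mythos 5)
Your proposal is correct and follows exactly the route the paper intends: Corollary~\ref{secexp} is stated without proof as an immediate specialization of Theorem~\ref{expexp} to $\textnormal{\textbf r}=\textnormal{\textbf s}=(1,\ldots,1)$, and your simplifications $\Omega^{\textnormal{\textbf r},\textnormal{\textbf s}}_{s,q}[j]=\prod_{t=1}^n[j(1-s)+(t-1)s]_q$, the extraction of $[s]_q^n$ via $[ab]_q=[a]_q[b]_{q^a}$, and $[k]_{q,1-s}!=[1-s]_q^k[k]_{q^{1-s}}!$ are precisely the computations needed, with the $s=0$ case correctly handled before any illegitimate division or extraction.
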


\section{$q$-Bell numbers}\label{third}

The Bell polynomial $B_n(x)$ is defined as the sum $\sum_{k=0}^n S(n,k) x^k$ while the Bell number $B_n$ is given by $B_n(1)$. Analogously, we define the generalized $q$-Bell polynomials $B^{\textbf r,\textbf s}_{s,h,q}[x]$ and generalized $q$-Bell numbers $B^{\textbf r,\textbf s}_{s,h,q}$ as
\begin{equation*}
B^{\textbf r,\textbf s}_{s,h,q}[x] = \sum_{k=s_1}^{|\textbf s|} S^{\textnormal{\textbf{r}}, \textnormal{\textbf{s}}}_{s,h,q}[k] x^{k}, \hspace{0.25in} B^{\textbf r,\textbf s}_{s,h,q}=B^{\textbf r,\textbf s}_{s,h,q}[1]\,.
\end{equation*}
If $\textbf r=(1,1,\ldots,1),\textbf s=(1,1,\ldots,1)$, we define
\begin{equation*} B_{s,h,q}[n;x] = B^{\textbf r,\textbf s}_{s,h,q}[x], \hspace{0.25in} B_{s,h,q}[n]=B_{s,h,q}[n;1]\,.
\end{equation*}
For all the other particular cases, we apply the same notational convention in Table \ref{table}. The numbers $B_{s,h,q}[n;x]$ and $B_{s,h,q}[n]$ reduce to the usual Bell polynomial $B(n;x)$ and Bell number $B(n)$, respectively, when $q=1,h=1,s=0$. The Bell polynomial $B(n;x)$ have the following expression as an infinite series known as the \emph{Dobinsky formula} (see \cite{Xu}) 
\begin{equation*}
B(n;x) = \frac{1}{e^x} \sum_{j=0}^{\infty} j^n \frac{x^j}{j!}\,.
\end{equation*}

The corollary that follows gives the Dobinsky-type formula for $B^{\textbf r,\textbf s}_{s,h,q}[x]$.

\begin{corollary}\label{dobcor} Let $s\neq 1$ and $\Omega^{\textnormal{\textbf r},\textnormal{\textbf s}}_{s,q}[j]$ be as in Theorem \ref{expexp}. Then,
\begin{equation}\label{dobdob}
B^{\textbf r,\textbf s}_{s,h,q}[x]=\left(\sum_{j=0}^{\infty} h^{|\textbf s|-j} (-1)^j q^{\binom{j}{2}(1-s)}
\frac{x^{j}}{[j]_{q,1-s}!} \right)\left(\sum_{j=0}^{\infty} \Omega^{\textnormal{\textbf r},\textnormal{\textbf s}}_{s,q}[j]
\frac{x^{j}}{h^j[j]_{q,1-s}!}\right)\,.
\end{equation}
\begin{proof} By the property $S^{\textnormal{\textbf{r}}, \textnormal{\textbf{s}}}_{s,h,q}[k]=0$ when $|\textbf s|<k<s_1$ and by Theorem \ref{expexp}\,, 
\begin{align*}
B^{\textbf r,\textbf s}_{s,h,q}[hx] &= \sum_{k=s_1}^{|\textbf s|} S^{\textnormal{\textbf{r}}, \textnormal{\textbf{s}}}_{s,h,q}[k] h^k x^{k} \\
&= \sum_{k=0}^{\infty} S^{\textnormal{\textbf{r}}, \textnormal{\textbf{s}}}_{s,h,q}[k] h^k x^{k} \\
&= \sum_{k=0}^{\infty} \sum_{j=0}^k \frac{h^{|\textnormal{\textbf{s}}|}}{[k]_{q,1-s}!} (-1)^{k-j} q^{\binom{k-j}{2}(1-s)} \pq{k}{j}_{q^{1-s}} \Omega^{\textnormal{\textbf r},\textnormal{\textbf s}}_{s,q}[j] x^k\,.
\end{align*}
Using $\pq{k}{j}_{q^{1-s}}=\frac{[k]_{q^{1-s}}!}{[j]_{q^{1-s}}![k-j]_{q^{1-s}}!}=\frac{[k]_{q,1-s}!}{[j]_{q,1-s}![k-j]_{q,1-s}!}$ and the Cauchy product rule,
\begin{align*}
B^{\textbf r,\textbf s}_{s,h,q}[hx] &= \sum_{k=0}^{\infty} \sum_{j=0}^k \left(\frac{h^{|\textnormal{\textbf{s}}|}
(-1)^{k-j} q^{\binom{k-j}{2}(1-s)}}{[k-j]_{q,1-s}!}\right)\left( \frac{\Omega^{\textnormal{\textbf r},\textnormal{\textbf
s}}_{s,q}[j]}{[j]_{q,1-s}!}\right) x^k \\
&= \left(\sum_{j=0}^{\infty} h^{|\textbf s|} (-1)^j q^{\binom{j}{2}(1-s)} \frac{x^{j}}{[j]_{q,1-s}!}
\right)\left(\sum_{j=0}^{\infty} \Omega^{\textnormal{\textbf r},\textnormal{\textbf s}}_{s,q}[j] \frac{x^{j}}{[j]_{q,1-s}!}\right)\,.
\end{align*} \end{proof} \end{corollary}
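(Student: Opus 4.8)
The plan is to start from the definition $B^{\textbf r,\textbf s}_{s,h,q}[x]=\sum_{k=s_1}^{|\textbf s|} S^{\textbf r,\textbf s}_{s,h,q}[k]\,x^k$ and insert the explicit double-sum formula for $S^{\textbf r,\textbf s}_{s,h,q}[k]$ supplied by Theorem \ref{expexp}. The summand there is already an inner sum over an index $j$ of a product involving $\Omega^{\textbf r,\textbf s}_{s,q}[j]$, a sign, a power of $q$, and a $q$-binomial coefficient, all divided by $[k]_{q,1-s}!$; this shape is exactly what one expects to resolve into a Cauchy product of two power series. To keep the bookkeeping of the $h$-powers clean, I would first establish the identity for $B^{\textbf r,\textbf s}_{s,h,q}[hx]$ and only at the very end substitute $x\mapsto x/h$ to recover the stated form \eqref{dobdob}.

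Concretely, I would proceed as follows. First, using the vanishing property $S^{\textbf r,\textbf s}_{s,h,q}[k]=0$ for $k<s_1$ or $k>|\textbf s|$ (noted just before Lemma \ref{lemrep}), extend the outer sum to run over all $k\geq 0$, so that we are manipulating a genuine power series. Next, substitute the formula from Theorem \ref{expexp} and rewrite the $q$-binomial coefficient in factorial form. The crucial algebraic step is the identity $\pq{k}{j}_{q^{1-s}}=[k]_{q,1-s}!/([j]_{q,1-s}!\,[k-j]_{q,1-s}!)$: although $[k]_{q^{1-s}}!$ and $[k]_{q,1-s}!$ differ by a factor $[1-s]_q^{\,k}$, these factors cancel in the ratio, so the binomial coefficient is the same under either normalization. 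This lets the leading $1/[k]_{q,1-s}!$ combine with the binomial, so that the summand factors as a product of a term depending only on $k-j$ and a term depending only on $j$. Applying the Cauchy product rule (equivalently, reindexing $k=m+j$) then splits the double sum into the product of the two series displayed in \eqref{dobdob}, evaluated at $hx$.

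The final step is routine: replacing $x$ by $x/h$ redistributes the powers of $h$, turning the prefactor $h^{|\textbf s|}$ in the first series into $h^{|\textbf s|-j}$ and inserting the factor $h^{-j}$ into the second series, which yields \eqref{dobdob} exactly. I expect the main obstacle to be the normalization bookkeeping rather than anything structural: one must track the two distinct $q$-factorial conventions carefully and verify the cancellation of the $[1-s]_q$ factors, and one should also remark that \eqref{dobdob} is an identity between a polynomial (the left-hand side is a finite sum) and a product of two convergent series, so that the rearrangement and the Cauchy product are legitimate. Since the hypothesis $s\neq 1$ keeps $[1-s]_q$ and hence each $[j]_{q,1-s}!$ nonzero for generic $q$, no degenerate cases arise.
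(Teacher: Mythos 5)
Your proposal is correct and follows essentially the same route as the paper's proof: extending the sum over $k$ via the vanishing of $S^{\textnormal{\textbf{r}},\textnormal{\textbf{s}}}_{s,h,q}[k]$ outside $[s_1,|\textnormal{\textbf{s}}|]$, working with $B^{\textnormal{\textbf{r}},\textnormal{\textbf{s}}}_{s,h,q}[hx]$, invoking Theorem \ref{expexp}, converting the $q$-binomial coefficient between the two factorial normalizations (with the $[1-s]_q$ powers cancelling), and applying the Cauchy product before substituting $x\mapsto x/h$. Your added remarks on convergence and on why the two normalizations give the same binomial coefficient are sound refinements of steps the paper leaves implicit.
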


Our goal in the remainder of this section is to obtain $q$-analogues of the following Bell number identities derived by Mansour, Schork and Shattuck \cite[Theorems 4.4 and 5.3]{ManSchSha1}:
\begin{align}
B_{s,h}(n) &= \sum_{r=0}^{n-1} h^{n-r-1} \binom{n-1}{r} B_{s,h}(r) \prod_{i=0}^{n-r-2} (1+si)\label{man1}\\
B_{s,h}(n+m) &= \sum_{r=0}^{n} \sum_{j=0}^m h^{n-r} \binom{n}{r} S(m,j)B_{s,h}(r) \prod_{i=0}^{n-r-1} (j(1-s)+sm+si)\label{man2}\,.
\end{align}
One verifies that \eqref{man2} reduces to \eqref{man1} when $n$ is replaced with $n-1$ and $m$ with $1$. The corresponding identity for the classical Bell numbers was first derived by Spivey \cite{Spi} and is given by
\begin{equation}\label{spivey}
B(n+m) = \sum_{r=0}^{n} \sum_{j=0}^m \binom{n}{r} S(m,j) B(r) j^{n-r}\,.
\end{equation}

In addition to \eqref{man2}, generalizations of the identity \eqref{spivey} have been proved using different methods (see \cite{Xu2} and the references therein). The approach we present here uses the rook model in Section \ref{second}.

Recall that $C_k(B;s)$ is the collection of all placements of $k$ rooks in the board $B$. Denote by $J_n$ the board outlined by the string $(VU)^n$. In the proofs that follow, we call a cell a \emph{bottom cell} if it is the bottommost cell in a column.

\begin{lemma}\label{col} Let $s\in\mathbb N$ and $\phi\in C_k(J_n;s)$. Then, there exists a unique (possibly empty) collection $\mathcal C$ of columns in $\phi$ such that if $|\mathcal C|=m+1$, then (a) each of these columns has a rook in the bottom $1,1+s,1+2s,\ldots,1+ms$ subcells and (b) every column not in $\mathcal C$ contains at least $1+st$ uncanceled subcells not containing a rook, where $t$ is the number of columns in $\mathcal C$ to the right of that column.
\begin{proof} Let $\phi\in C_k(J_n;s)$. The set $\mathcal C$ may be obtained as follows. Let $c_1$ be the first column of $\phi$ from the right containing a rook in the bottom cell. If $c_1$ does not exist, then $\mathcal C=\varnothing$ and $m=-1$. If $c_1$ exists, then all columns to the right of $c_1$ have at least 1 uncanceled cell not containing a rook. Let $c_2$ be the first column to the left of $c_1$ containing a rook in the bottom $1+s$ subcells. If $c_2$ does not exist, then all columns to the left of $c_1$ contain at least $1+s$ uncanceled subcells not containing a rook. Hence, $\mathcal C=\{c_1\}$ satisfies (a) and (b) and $m=0$. Otherwise, if such a column $c_2$ exists, then all columns to the right of $c_2$ and to the left of $c_1$ contain $1+s$ uncanceled subcells not containing a rook. Let $c_3$ be the first column to the left of $c_2$ containing a rook in the bottom $1+2s$ subcells. If $c_3$, does not exist, then $\mathcal C=\{c_1,c_2\}$ satisfies (a) and (b) and $m=1$. We repeat the process with the succeeding columns as long as needed until all the elements of $\mathcal C$ are determined. This process shows both the existence and uniqueness of $\mathcal C$, which proves the lemma. \end{proof} \end{lemma}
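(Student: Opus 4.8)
The plan is to build $\mathcal C$ by a single greedy right-to-left scan of the rooks of $\phi$ and then to argue that this scan is the only output compatible with (a) and (b). First I would set $c_1$ to be the rightmost column of $\phi$ carrying a rook in its bottom cell, putting $\mathcal C=\varnothing$ (so $m=-1$) if no such column exists. Having selected $c_1,\dots,c_i$, I would let $c_{i+1}$ be the rightmost column lying to the left of $c_i$ whose rook occupies one of the bottom $1+is$ subcells, and I would stop as soon as no such column remains. Since each $c_{i+1}$ is strictly to the left of $c_i$ and $J_n$ has finitely many columns, the scan terminates with an ordered set $\mathcal C=\{c_1,\dots,c_{m+1}\}$. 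Property (a) is then immediate, since by construction the $j$-th member (counted from the right) carries its rook in the bottom $1+(j-1)s$ subcells, which is precisely the list $1,1+s,\dots,1+ms$.

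For (b) I would fix a column $d\notin\mathcal C$ and let $t$ be the number of members of $\mathcal C$ lying to its right, and I expect two cases. If $d$ carries a rook, then because $d$ failed to be chosen as the next greedy column its rook must avoid the bottom $1+ts$ subcells; hence the (at least) $1+ts$ subcells beneath it are uncanceled—cancellation occurs only above a rook—and rook-free, which settles this case. If $d$ is rook-free then all of its subcells are uncanceled and rook-free, and I must exhibit at least $1+ts$ of them; this is the step I expect to be the crux. Here the staircase geometry of $J_n$ is essential: the number of columns strictly to the right of $d$ equals $h_d-1$, where $h_d$ is the height of $d$. Since the $t$ members of $\mathcal C$ to the right of $d$ each carry a rook, $t\le h_d-1$, i.e. $h_d\ge t+1$. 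Moreover each of those $t$ rooks divides its own row to the left into $s$ rows and hence contributes $s-1$ extra subcells to $d$, so $d$ has at least $h_d+(s-1)t\ge (t+1)+(s-1)t=1+ts$ subcells, as required.

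Finally I would establish uniqueness by showing that (a) and (b) force the greedy choices, arguing from the right. If $\mathcal C'$ satisfies (a) and (b), its rightmost member carries a rook in the bottom cell by (a); furthermore no column to its right can carry a rook in its bottom cell, since such a column would have $t=0$ members of $\mathcal C'$ to its right yet possess no uncanceled rook-free subcell, contradicting (b). Hence the rightmost member of $\mathcal C'$ coincides with $c_1$, and an induction of the same shape matches each subsequent member, giving $\mathcal C'=\mathcal C$. The delicate point throughout is the bookkeeping of subcells produced by the row-creation rule, because the subcell count of a column depends on how many rooks sit to its right; the staircase identity $h_d-1=\#\{\text{columns to the right of }d\}$ is exactly what turns this bookkeeping into the clean inequality above, and it is the only place where the special shape of $J_n$ (rather than an arbitrary Ferrers board) is used.
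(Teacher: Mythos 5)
Your proof is correct and takes essentially the same approach as the paper: the identical greedy right-to-left construction of $\mathcal C$, with (a) holding by construction and (b) and uniqueness verified by the same kind of case analysis; in fact you supply details (the subcell count for rook-free columns and the induction for uniqueness) that the paper's own terse proof leaves implicit. One small bookkeeping point: if the empty rightmost column of $J_n$ is counted as a column, then a column of height $h_d$ has $h_d$ columns strictly to its right rather than $h_d-1$, but your key inequality $t\le h_d-1$ still holds because that empty column can never carry a rook.
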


\begin{theorem}\label{firstiden} Let $n,k\in\mathbb N$. Then
\begin{equation}\label{ssum}
S_{s,h,q}[n,k] = \sum_{r=k-1}^{n-1} h^{n-r-1} q^r \pq{n-1}{r}_{q^s} S_{s,h,q}[r,k-1] \prod_{i=0}^{n-r-2} [1+si]_q\,.
\end{equation}
Furthermore, the numbers $B_{s,h,q}[n]$ are given by
\begin{equation}\label{bsum} B_{s,h,q}[n] = \sum_{r=0}^{n-1} h^{n-r-1} q^r \pq{n-1}{r}_{q^s} B_{s,h,q}[r] \prod_{i=0}^{n-r-2} [1+si]_q\,.
\end{equation}
\begin{proof} We first prove \eqref{ssum}. The number $S_{s,h,q}[n,k]$ equals the sum of the weights of all rook placements in $C_{n-k}(J_n;s)$. Let $R_{n-k}(J_n;s;n-r-2)$ be the subset of $C_{n-k}(J_n;s)$ consisting of rook placements that satisfy conditions (a) and (b) in Lemma \ref{col} with $m=n-r-2$. Then, the same lemma implies that the collection $\{R_{n-k}(J_n;s;n-r-2)\,|\,k-1\leq r< n-1\}$ forms a partition of $C_{n-k}(J_n;s)$. We want to show that
\begin{equation*}
\sum_{\phi\in R_{n-k}(J_n;s;n-r-2)} \omega(\phi) = h^{n-r-1} q^r \pq{n-1}{r}_{q^s} S_{s,h,q}[r,k-1] \prod_{i=0}^{n-r-2} [1+si]_q\,,
\end{equation*}
from which \eqref{ssum} follows.

For $\phi\in R_{n-k}(J_n;s;n-r-2)$, denote its set of columns satisfying the conditions in Lemma \ref{col} by $\mathcal C_{\phi}$. Let
$\lambda_{\mathcal C_{\phi}}$ be the cells of $\phi$ consisting of its cells of the columns $\mathcal C_{\phi}$ and the bottom $st+1$ cells of the other columns, where $t$ is the number of columns from $\mathcal C_{\phi}$ that lie to the right of one such column. Also, let $\phi-\lambda_{\mathcal C_{\phi}}$ be the cells of $\phi$ not in $\lambda_{\mathcal C_{\phi}}$. One sees that the cells in $\phi-\lambda_{\mathcal C_{\phi}}$ form a rook placement in $C_{r-k+1}(J_{r};s)$. We can therefore write every rook placement $\phi\in
R_{n-k}(J_n;s;n-r-2)$ uniquely as a pair $(\lambda_{\mathcal C},\rho)$, for some set of columns $\mathcal C$ satisfying Lemma \ref{col} with $m=n-r-2$, and some rook placement $\rho$ in $C_{r-k+1}(J_{r};s)$. The sum of the weights of rook placements in $C_{r-k+1}(J_{r};s)$ is $S_{s,h,q}[r,k-1]$. We now compute the sum of the weights of the cells in $\lambda_{\mathcal C}$ over all such possible set of columns $\mathcal C$, which we denote by $L_{n-r-2}$.

Clearly, the cells in $\lambda_{\mathcal C}$ which contain rooks contribute a weight of $h^{n-r-1} \prod_{i=0}^{n-r-2} [1+si]_q$. In addition, $\lambda_{\mathcal C}$ contains $r$ bottom cells, which collectively contribute a weight of $q^r$. The weight contributed by the remaining cells depends on the location of the rooks. To get a better picture of how the contribution by the remaining cells varies, let us distribute the cells of $\lambda_{\mathcal C}$ so that the $s$ subcells lie to the left of each rook. We illustrate this in Figure \ref{vary} for $n=9$ and $n-r-2=3$. Here, the second column of $\lambda_{\mathcal C}$ from the left lies to the left of three columns of $\lambda_{\mathcal C}$ containing rooks. Hence, this column contains $1+3s$ bottom cells. We moved each of the three $s$ subcells (indicated by marking a cell with an $s$) so that they lie to the left of each of the three rooks. Going back to the general case, let $t_1,t_2,\ldots,t_{n-r-1}$ be the number of cells marked $s$ in the rows containing the rooks in $\lambda_{\mathcal C}$ starting from bottommost row. Then the $t_i$'s satisfy $0\leq t_1\leq t_2\leq \ldots \leq t_{n-r-1}\leq r$. Hence, if $\omega(\lambda_{\mathcal C})$ denotes the product of the weights of the cells in $\lambda_{\mathcal C}$, we have
\begin{align*}
\sum_{\mathcal C\in L_{n-r-2}} \omega(\lambda_{\mathcal C}) &= h^{n-r-1} q^r \prod_{i=0}^{n-r-2} [1+si]_q \sum_{0\leq t_1\leq t_2\leq \ldots \leq t_{n-r-1}\leq r} q^{st_1} q^{st_2} \cdots q^{st_{n-r-1}} \\
&= h^{n-r-1} q^r \prod_{i=0}^{n-r-2} [1+si]_q \pq{n-1}{r}_{q^s}\,.
\end{align*}
where the second equality follows from \eqref{tset} with $v_i=0,w_i=q^{is}$. This proves \eqref{ssum}.

To prove \eqref{bsum}, we take the sum of both sides of \eqref{ssum} over all $0\leq k\leq n$. \end{proof} \end{theorem}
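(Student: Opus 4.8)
The plan is to establish the Stirling recurrence \eqref{ssum} by a weighted bijection in the rook model of Section \ref{second}, and then to obtain the Bell recurrence \eqref{bsum} as a formal consequence. The starting point is the identification $S_{s,h,q}[n,k]=R_{s,h,q}[J_n,n-k]$, so that $S_{s,h,q}[n,k]$ is the total weight of the placements in $C_{n-k}(J_n;s)$, where $J_n$ is the board of $(VU)^n$. The organising principle is Lemma \ref{col}: each placement $\phi$ carries a canonical collection $\mathcal C$ of distinguished columns, and writing $|\mathcal C|=n-r-1$ (i.e.\ taking $m=n-r-2$ in the lemma) sorts $C_{n-k}(J_n;s)$ into disjoint classes indexed by $r$, with $k-1\le r\le n-1$. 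Since these classes partition $C_{n-k}(J_n;s)$, it suffices to show that the total weight of the class indexed by $r$ equals the $r$-th summand of \eqref{ssum}.

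First I would fix $r$ and split each placement $\phi$ in the corresponding class into a pair $(\lambda_{\mathcal C},\rho)$, where $\lambda_{\mathcal C}$ comprises the cells of the $n-r-1$ distinguished columns together with the bottom $st+1$ cells of each remaining column (with $t$ the number of distinguished columns to its right), and $\rho$ is everything else. A rook count gives $n-r-1$ rooks inside $\lambda_{\mathcal C}$ and hence $r-k+1$ rooks in $\rho$, and one checks that $\rho$ is a legitimate placement on a copy of $J_r$, that is, $\rho\in C_{r-k+1}(J_r;s)$. Summing the weights of the $\rho$'s over the whole class therefore reproduces $S_{s,h,q}[r,k-1]$, and the remaining task is to sum the weights $\omega(\lambda_{\mathcal C})$ over all admissible configurations of distinguished columns.

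The core computation is this weight sum, call it $L_{n-r-2}$. The $n-r-1$ rooks, together with the freedom of their heights inside the distinguished columns, contribute $h^{n-r-1}\prod_{i=0}^{n-r-2}[1+si]_q$, while the $r$ bottom cells of $\lambda_{\mathcal C}$ contribute $q^r$. The delicate factor is the $q^s$-weighting of the subcells created by the rooks: after sliding the $s$ new subcells produced by each rook to the left of that rook, the surplus is recorded by integers $0\le t_1\le t_2\le\cdots\le t_{n-r-1}\le r$ counting the marked subcells in each rook's row, and the associated weight sum is $\sum q^{s(t_1+\cdots+t_{n-r-1})}$. By identity \eqref{tset} with $v_i=0$ and $w_i=q^{is}$ this is the complete symmetric function $h_{n-r-1}(\{1,q^s,\dots,q^{rs}\})=\pq{n-1}{r}_{q^s}$. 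Collecting the factors gives $L_{n-r-2}=h^{n-r-1}q^r\pq{n-1}{r}_{q^s}\prod_{i=0}^{n-r-2}[1+si]_q$, and multiplying by $S_{s,h,q}[r,k-1]$ and summing over $r$ yields \eqref{ssum}.

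The step I expect to demand the most care is verifying that $\phi\mapsto(\lambda_{\mathcal C},\rho)$ is genuinely a weight-preserving bijection onto the product of the $\lambda$-configurations with $C_{r-k+1}(J_r;s)$ --- in particular that the redistribution of the freshly created subcells to the left of their rooks accounts for every power of $q$ exactly once and respects the monotonicity constraint on the $t_i$; once this is in place the Gaussian-binomial evaluation via \eqref{tset} is immediate. Finally, \eqref{bsum} follows by summing \eqref{ssum} over all $k$ and using $\sum_k S_{s,h,q}[r,k-1]=B_{s,h,q}[r]$, which is the definition of the $q$-Bell numbers.
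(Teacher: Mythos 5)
Your proposal is correct and follows essentially the same route as the paper's own proof: the same partition of $C_{n-k}(J_n;s)$ via Lemma \ref{col}, the same decomposition $\phi\mapsto(\lambda_{\mathcal C},\rho)$ with $\rho\in C_{r-k+1}(J_r;s)$, the same evaluation of the $\lambda$-weight sum through the sliding argument and identity \eqref{tset} with $v_i=0$, $w_i=q^{is}$, and the same summation over $k$ for \eqref{bsum}. The only addition is your explicit identification of the sum as the complete symmetric function $h_{n-r-1}(\{1,q^s,\dots,q^{rs}\})$, which is a harmless (and correct) gloss on the paper's appeal to \eqref{tset}.
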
 

\begin{figure}[htbp]
\begin{center}
\begin{tabular}{|c|c|c|c|c|c|c|c|c}
\cline{1-9}
$\times$ & $s$ & $s$ & $s$ & $\times$ & $s$ & $\times$ & $\bullet$ &  \multicolumn{1}{c}{\hphantom{x}} \\
\cline{1-8}
$\times$ & $s$ & $s$ & $s$ & $\times$ & $s$ & $\bullet$ & \multicolumn{1}{c}{} &  \multicolumn{1}{c}{} \\
\cline{1-7}
$\times$ & & & & $\times$ & & \multicolumn{1}{c}{} & \multicolumn{1}{c}{} &  \multicolumn{1}{c}{}\\
\cline{1-6}
$\times$ & $s$ & $s$ & $s$ & $\bullet$ & \multicolumn{1}{c}{} & \multicolumn{1}{c}{} & \multicolumn{1}{c}{} &  \multicolumn{1}{c}{} \\
\cline{1-5}
$\times$ &  &  &  & \multicolumn{1}{c}{} & \multicolumn{1}{c}{} & \multicolumn{1}{c}{} & \multicolumn{1}{c}{} &  \multicolumn{1}{c}{} \\
\cline{1-4}
$\times$ &  &  & \multicolumn{1}{c}{} & \multicolumn{1}{c}{} & \multicolumn{1}{c}{} & \multicolumn{1}{c}{} & \multicolumn{1}{c}{} &  \multicolumn{1}{c}{} \\
\cline{1-3}
$\times$ &  & \multicolumn{1}{c}{} & \multicolumn{1}{c}{} & \multicolumn{1}{c}{} & \multicolumn{1}{c}{} & \multicolumn{1}{c}{} & \multicolumn{1}{c}{} &  \multicolumn{1}{c}{} \\
\cline{1-2}
$\bullet$ & \multicolumn{1}{c}{}  & \multicolumn{1}{c}{} & \multicolumn{1}{c}{} & \multicolumn{1}{c}{} & \multicolumn{1}{c}{} & \multicolumn{1}{c}{} & \multicolumn{1}{c}{} &  \multicolumn{1}{c}{} \\
\cline{1-1}
\multicolumn{1}{|c}{} & \multicolumn{1}{c}{}  & \multicolumn{1}{c}{} & \multicolumn{1}{c}{} & \multicolumn{1}{c}{} & \multicolumn{1}{c}{} & \multicolumn{1}{c}{} & \multicolumn{1}{c}{} &  \multicolumn{1}{c}{} \\
\end{tabular}
\caption{}
\end{center}
\label{vary}
\end{figure}

\begin{lemma} \label{col2} Denote by $J_{n\oplus \alpha}$ the board outlined by $(VU)^{n} V^{\alpha}$. Let $s\in\mathbb N$ and $\phi\in C_{k}(J_{n\oplus \alpha};s)$. Then, there exists a unique (possibly empty) collection $\mathcal C$ of columns in $\phi$ such that if $|\mathcal C|=m+1$, then (a) each of these columns has a rook in the bottom $\alpha,\alpha+s,\ldots,\alpha + sm$ subcells and (b) every column not in $\mathcal C$ contains at least $\alpha + st $ uncanceled subcells not containing a rook, where $t$ is the number of columns in $\mathcal C$ to the right of that column.
\begin{proof} The proof is similar to that of Lemma \ref{col}. \end{proof} \end{lemma}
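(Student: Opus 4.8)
The plan is to mirror the greedy construction in the proof of Lemma \ref{col}, the only new ingredient being the raised base created by the extra $V^\alpha$. First I would record the effect of this $V^\alpha$ on the board under the row creation rule: passing from $(VU)^n$ to $(VU)^nV^\alpha$ deepens the bottom of every column, so that the lowest cells of $J_{n\oplus\alpha}$ form a base of height $\alpha$ rather than a single bottom row. This is exactly what turns the thresholds $1,1+s,\dots,1+sm$ of Lemma \ref{col} into $\alpha,\alpha+s,\dots,\alpha+sm$ here: a rook sitting in the bottom of a column now occupies one of $\alpha$ base subcells instead of a single bottom cell, and each division produced by a rook to its right still adds $s-1$ subcells per cell, so the running count rises from $\alpha$ in steps of $s$.

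With this dictionary in place, I would run the identical right-to-left greedy selection. Let $\phi\in C_k(J_{n\oplus\alpha};s)$. Take $c_1$ to be the rightmost column carrying a rook within its bottom $\alpha$ subcells; if there is none, set $\mathcal C=\varnothing$ and $m=-1$, and condition (b) holds with $t=0$ since every column then has at least $\alpha$ uncanceled rook-free base subcells. Having chosen $c_1,\dots,c_i$ with rooks in the bottom $\alpha,\alpha+s,\dots,\alpha+(i-1)s$ subcells respectively, let $c_{i+1}$ be the rightmost column strictly to the left of $c_i$ whose rook lies within its bottom $\alpha+is$ subcells, terminating the process when no such column exists. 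As in Lemma \ref{col}, the ``rightmost'' qualifier forces each choice, so the resulting $\mathcal C$ is simultaneously shown to exist and to be unique, and condition (a) is built into the selection.

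The substantive step --- and the one place the argument is not a verbatim copy --- is verifying condition (b) for the columns that are skipped, which is where the base-$\alpha$ bookkeeping must be checked. Here I would argue that a skipped column $d$ lying to the left of $t$ already-selected columns has received one row-division from each of those $t$ rooks, contributing $t(s-1)$ extra subcells, which together with its base and the staircase growth of the column heights guarantees that $d$ possesses at least $\alpha+st$ subcells in its bottom portion; since $d$ was skipped, it carries no rook within those subcells, so they are uncanceled and rook-free, yielding (b) with the correct value of $t$. I expect this count --- confirming that the increment is exactly $s$ per division and that the skipped columns are tall enough to accommodate $\alpha+st$ subcells --- to be the main obstacle, but it is entirely parallel to the corresponding verification in Lemma \ref{col} with $1$ replaced by $\alpha$; termination and uniqueness are then immediate from finiteness and the forced choices.
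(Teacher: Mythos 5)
Your proposal is correct and follows essentially the same route as the paper, whose proof of Lemma \ref{col2} is simply the greedy right-to-left selection of Lemma \ref{col} with the threshold $1$ replaced by $\alpha$: you perform the identical forced construction of $c_1,c_2,\ldots$, and your verification of (b) (base $\alpha$, plus one extra cell per selected column from the staircase, plus $s-1$ subcells per division, giving $\alpha+t+t(s-1)=\alpha+st$) is exactly the bookkeeping the paper leaves implicit. No gaps to report.
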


\begin{theorem}\label{seciden} Let $n,m,k\in\mathbb N$. We have
\begin{equation}\label{nmk}
S_{s,h,q}[n+m,k] = \sum_{r=0}^n \sum_{j=0}^m h^{n-r}
q^{r(j(1-s)+sm)} \pq{n}{r}_{q^s} S_{s,h,q}[m,j] S_{s,h,q}[r,k-j] \prod_{i=0}^{n-r-1} [j(1-s)+sm + si]_q \,.
\end{equation}
Moreover, the numbers $B_{s,h,q}[n+m]$ are given by
\begin{equation}\label{nm}
B_{s,h,q}[n+m] = \sum_{r=0}^n \sum_{j=0}^m h^{n-r} q^{r(j(1-s)+sm)} \pq{n}{r}_{q^s} S_{s,h,q}[m,j] B_{s,h,q}[r] \prod_{i=0}^{n-r-1} [j(1-s)+sm + si]_q\,.
\end{equation}
\begin{proof} We first prove $\eqref{nmk}$. The number $S_{s,h,q}[n+m,k]$ equals the total weight of all rooks placements in $C_{n+m-k}(J_{n+m};s)$. The rooks may be placed as follows: Number the columns from right to left. For some $j$ such that $0\leq j\leq m$, place $m-j$ rooks in columns $2,\ldots,m$ and the remaining $n+j-k$ rooks in columns $m+1,\ldots, n$. The total weight of all placements of $m-j$ rooks in columns $2,\ldots,m$ is $S_{s,h,q}[m,j]$. As a consequence of placing $m-j$ rooks, each of columns $m+1,\ldots,n$ have ${s(m-j)+j = j(1-s) + sm} $ subcells in their first $m$ cells from the top. Hence, these columns form the board $J_{n\oplus\alpha}$, with $\alpha=j(1-s) + sm$. Using Lemma \ref{col2}, we can form a suitable partition of $C_k(J_{n\oplus\alpha};s)$ and proceed as in the proof Identity \eqref{ssum} in Theorem \ref{firstiden}. We leave the details to the reader.

To obtain \eqref{nm}, take the sum of both sides of \eqref{nmk} over all $0\leq k\leq n+m$. \end{proof} \end{theorem}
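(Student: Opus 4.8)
The plan is to read off $S_{s,h,q}[n+m,k]$ as the total weight $\sum_{\phi\in C_{n+m-k}(J_{n+m};s)}\omega(\phi)$ and to organize these placements by a two-stage column split, paralleling the proof of Theorem~\ref{firstiden}. Numbering the columns of $J_{n+m}$ from right to left, I would first isolate the rightmost $m$ columns, which by themselves form the staircase board $J_m$, from the $n$ tallest columns on the left. Conditioning on the integer $j$ with $0\le j\le m$ equal to the number of rook-free columns among the rightmost $m$, the restriction of $\phi$ to those columns is an arbitrary element of $C_{m-j}(J_m;s)$, and summing its weight over all such restrictions gives exactly $S_{s,h,q}[m,j]$.

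Next I would record the effect of the right block on the left block. Over the $m$ rows spanned by the right block, each of the $m-j$ rook-bearing rows contributes $s$ subcells and each of the $j$ rook-free rows contributes $1$ subcell to the corresponding cell of every left column, so that the top $m$ cells of each of the $n$ left columns hold $\alpha:=s(m-j)+j=j(1-s)+sm$ subcells in total. As a subcell board the left block is therefore $J_{n\oplus\alpha}$, and the remaining $n+j-k$ rooks must be placed on it. Here I would invoke Lemma~\ref{col2} with this value of $\alpha$ to partition $C_{n+j-k}(J_{n\oplus\alpha};s)$ by the distinguished column set $\mathcal C$; writing $|\mathcal C|=n-r$, each placement factors uniquely as a distinguished block $\lambda_{\mathcal C}$ together with a residual placement on $J_r$. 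The residual carries $(n+j-k)-(n-r)=r+j-k$ rooks, and since $C_{r+j-k}(J_r;s)$ has total weight $S_{s,h,q}[r,r-(r+j-k)]=S_{s,h,q}[r,k-j]$, this factor supplies precisely the $S_{s,h,q}[r,k-j]$ appearing in \eqref{nmk}.

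The weight of $\lambda_{\mathcal C}$, summed over all admissible $\mathcal C$, is then computed as in Theorem~\ref{firstiden} but with the bottom block of each ordinary left column now holding $\alpha$ subcells rather than one. The $n-r$ distinguished rooks, constrained by Lemma~\ref{col2} to lie in the bottom $\alpha,\alpha+s,\dots,\alpha+s(n-r-1)$ subcells of their columns, contribute $h^{n-r}\prod_{i=0}^{n-r-1}[\alpha+si]_q$; the $r$ bottom blocks of the ordinary columns contribute $q^{r\alpha}=q^{r(j(1-s)+sm)}$; and sliding the $s$-subcells to the left of the rooks produces exponents $0\le t_1\le\cdots\le t_{n-r}\le r$ whose generating sum is $h_{n-r}(\{1,q^s,\dots,q^{rs}\})=\pq{n}{r}_{q^s}$ by \eqref{tset} with $v_i=0,\ w_i=q^{is}$. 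Multiplying these three contributions by $S_{s,h,q}[m,j]$ and $S_{s,h,q}[r,k-j]$ and summing over $0\le r\le n$ and $0\le j\le m$ yields \eqref{nmk}. Identity \eqref{nm} then follows by summing \eqref{nmk} over all $k$, since $\sum_k S_{s,h,q}[r,k-j]=B_{s,h,q}[r]$ while every other factor is free of $k$.

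The main obstacle I anticipate is not any single evaluation but the bookkeeping that turns this decomposition into a weight-preserving bijection. One must verify that the subcell count induced on the left block is uniformly $\alpha$ across its top $m$ cells, that Lemma~\ref{col2} applies with exactly this $\alpha$, and that the three weight contributions of $\lambda_{\mathcal C}$ partition its cells with no overlap or omission. The factor $q^{r\alpha}$ in particular requires confirming that each of the $r$ bottom blocks of the ordinary left columns is a genuine $\alpha$-fold stack of subcells; this is the place where passing from the $\alpha=1$ situation of Theorem~\ref{firstiden} to general $\alpha$ demands the most care.
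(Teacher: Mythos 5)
Your proposal is correct and takes essentially the same route as the paper's proof: the same split of $J_{n+m}$ into the rightmost $m$ columns (whose restricted placements give $S_{s,h,q}[m,j]$) and a left block viewed as $J_{n\oplus\alpha}$ with $\alpha=j(1-s)+sm$, then partitioned via Lemma \ref{col2} exactly as in Theorem \ref{firstiden}. Indeed, you supply precisely the weight computations ($h^{n-r}\prod_{i=0}^{n-r-1}[\alpha+si]_q$ for the distinguished rooks, $q^{r\alpha}$ for the bottom blocks, and $\pq{n}{r}_{q^s}$ via \eqref{tset}) that the paper leaves to the reader.
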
 

\begin{corollary}\label{thirdiden} The recurrence relation for the generalized $q$-Bell polynomials is given by
\begin{equation*}
B_{q}[n+m;x] = \sum_{r=0}^n \sum_{j=0}^m h^{n-r} q^{r(j(1-s)+sm)} \pq{n}{r}_{q^s} S_{s,h,q}[m,j] B_{q}[r;x]x^j \prod_{i=0}^{n-r-1} [j(1-s)+sm + si]_q\,.
\end{equation*}
In particular,
\begin{equation*}
B_{s,h,q}[n;x] = \sum_{r=0}^{n-1} h^{n-r-1} q^r \pq{n-1}{r}_{q^s} B_{s,h,q}[r;x] x \prod_{i=0}^{n-r-2} [1+si]_q\,.
\end{equation*}
Moreover, a $q$-analogue of Spivey's identity \eqref{spivey} is
\begin{equation*} B_{q}[n+m] = \sum_{r=0}^n \sum_{j=0}^m q^{rj} \binom{n}{r} S_{s,h,q}[m,j] B_{q}[r] [j]_q^{n-r}
\end{equation*}
where we take $0^0=1$. \end{corollary}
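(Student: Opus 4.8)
The plan is to obtain all three displayed identities from the Stirling-level identity \eqref{nmk} of Theorem \ref{seciden}, the only substantive ingredient being the passage to Bell polynomials through the defining relation $B_{s,h,q}[r;x]=\sum_{k}S_{s,h,q}[r,k]\,x^{k}$.

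First, for the recurrence for $B_{q}[n+m;x]$, I would multiply both sides of \eqref{nmk} by $x^{k}$ and sum over all $k$. The left-hand side is $B_{q}[n+m;x]$ by definition. On the right, the only factor depending on $k$ is $S_{s,h,q}[r,k-j]$, so after interchanging the (finite) summations the inner sum is $\sum_{k}S_{s,h,q}[r,k-j]\,x^{k}$; re-indexing $k'=k-j$ and using that $S_{s,h,q}[r,k']=0$ for $k'<0$ or $k'>r$, this collapses to $x^{j}\sum_{k'}S_{s,h,q}[r,k']\,x^{k'}=x^{j}B_{q}[r;x]$. Collecting the remaining ($k$-free) factors reproduces the stated recurrence verbatim.

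Second, the ``in particular'' formula is the specialization $m=1$. Here the boundary values $S_{s,h,q}[1,0]=0$ and $S_{s,h,q}[1,1]=1$, read off from \eqref{recrel} together with the initial conditions $S_{s,h,q}[n,0]=\delta_{0,n}$, annihilate every term except $j=1$. Setting $m=j=1$ collapses the exponent $q^{r(j(1-s)+sm)}$ to $q^{r}$ and the product $\prod_{i=0}^{n-r-1}[j(1-s)+sm+si]_q$ to $\prod_{i=0}^{n-r-1}[1+si]_q$; after the relabelling $n\mapsto n-1$ this is precisely the claimed recurrence for $B_{s,h,q}[n;x]$.

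Third, the $q$-analogue of Spivey's identity is the specialization $s=0$, $h=1$, $x=1$ of the polynomial recurrence just proved. At $s=0$ the weight simplifies as $q^{r(j(1-s)+sm)}=q^{rj}$, the $q^{s}$-binomial degenerates to the ordinary binomial $\pq{n}{r}_{q^{0}}=\binom{n}{r}$, the Stirling factor becomes $S_{0,1,q}[m,j]=S_{q}[m,j]$, and every factor of the product reduces to $[j]_q$, giving $\prod_{i=0}^{n-r-1}[j(1-s)+sm+si]_q=[j]_q^{n-r}$; putting $x=1$ turns $B_{q}[r;x]\,x^{j}$ into $B_{q}[r]$. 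The one point requiring care is this $s=0$ degeneration: the product becomes $i$-independent, which is exactly what produces the clean power $[j]_q^{n-r}$, and the boundary contribution $j=0,\ r=n$ must be read with the convention $0^{0}=1$ so that it survives while all other $j=0$ terms vanish. Beyond this bookkeeping and the index shift $k\mapsto k-j$ in the first step, I anticipate no genuine obstacle, since the hard combinatorial work has already been carried out in Theorem \ref{seciden}.
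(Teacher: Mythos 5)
Your proposal is correct and is exactly the argument the paper intends (the paper leaves this corollary's proof implicit): multiply the Stirling-level identity \eqref{nmk} by $x^{k}$, sum over $k$ with the shift $k\mapsto k-j$ to produce $x^{j}B_{s,h,q}[r;x]$, then specialize $m=1$ (using $S_{s,h,q}[1,1]=1$, $S_{s,h,q}[1,0]=0$) and $s=0,\ h=1,\ x=1$ respectively. Your reading of the Spivey-type formula as the $s=0,\ h=1$ case (so that the factor written $S_{s,h,q}[m,j]$ is really $S_{q}[m,j]$) and your handling of the $j=0,\ r=n$ term via $0^{0}=1$ are both the correct interpretation.
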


\section{The numbers  $S^{\textnormal{\textbf{r}}, \textnormal{\textbf{s}}}_{s,h,q}[k]$ and $S_{s,h,q}[n,k]$ for $s\in\mathbb R$}\label{fourth}

Goldman and Haglund introduced, in the latter part of their paper \cite{GolHag}, a rook placement rule which is defined for $s\in\mathbb R$, such that the corresponding rook numbers agree with the rook numbers under the row creation rule when $s\in\mathbb N$. We give an presentation of this rule using the concept of \emph{pre-weights}. As before, we place the rooks in some chosen columns from right to left such that at most one rook occupies each column. We say that every cell has a default pre-weight of 1 and a placement of a rook adds a pre-weight of $s-1$ to every cell to its left. Every cell lying above a rook is assigned the weight 1. If a cell does not lie above a rook and has pre-weight $p$, then it is assigned the weight $h[p]_q$ if it contains a rook, and $q^p$ if otherwise. Let us call the rook placement rule we just described as the \emph{pre-weight rule}. We denote the weight of a rook placement $\psi$ by $\omega(\psi)$, which is defined as the product of the weights of the cells. Given a board $B$, we denote by $C^*_k(B;s)$ the collection of all placement of $k$ rooks on $B$ under the pre-weight rule. When $s\in\mathbb N$, the pre-weight of a cell is exactly the number of its subcells. In this case, we have
\begin{equation*}
R_{s,h,q}[B(w),k] = \sum_{\phi\in C_k(B;s)} \omega(\phi) = \sum_{\psi\in C^*_k(B;s)} \omega(\psi)\,.
\end{equation*}
We can therefore define $R[B(w),k]$ as
\begin{equation*}
R_{s,h,q}[B(w),k] = \sum_{\psi\in C^*_k(B;s)} \omega(\psi)\,.
\end{equation*}
when $s\in\mathbb R$ without ambiguity. Using this definition, we can extend numbers the $S^{\textbf r,\textbf s}_{s,h,q}[k]$ for $s\in\mathbb R$ by defining them as $S^{\textnormal{\textbf{r}}, \textnormal{\textbf{s}}}_{s,h,q}[k] =
R_{s,h,q}[B(H_{\textnormal{\textbf{r}},\textnormal{\textbf{s}}}),|\textbf s|-k]$. Analogously, we define $S_{s,h,q}[n,k]=R_{s,h,q}[B((VU)^n),|\textbf s|-k]$. It can be shown that under this definition, $S_{s,h,q}[n,k]$ satisfies the same recursion\eqref{recrel}. Since the identities for $S_{s,h,q}[n,k]$ in Section \ref{first} were derived using this recursion, all of them also hold when $s\in\mathbb R$. Note that the numbers $S^{\textnormal{\textbf{r}}, \textnormal{\textbf{s}}}_{s,h,q}[n,k]$ lose their interpretation as normal ordering coefficients when $s$ is not a nonnegative integer. 

The proof of Lemma \ref{lemrep}, and hence, of Theorem \ref{theoremrep} (explicit formula for $S^{\textnormal{\textbf{r}}, \textnormal{\textbf{s}}}_{s,h,q}[k]$) and Corollary \ref{correp} (explicit formula for $S_{s,h,q}[n,k]$) extends readily under the pre-weight rule. We leave the details to the reader. On the other hand, the proof of Theorem \ref{expexp} (explicit formula for $S^{\textnormal{\textbf{r}}, \textnormal{\textbf{s}}}_{s,h,q}[k]$), and hence, of Corollary \ref{secexp} (explicit formula for $S_{s,h,q}[n,k]$) and Corollary \ref{dobcor} (Dobinsky-type formula), uses the rook factorization \eqref{sumed}, which was computed using the normal ordering interpretation. The result \cite[Theorem 7.1]{GolHag} furnishes the needed rook factorization for $h=1$, which will allow us to extend the said theorem and its corollaries. This result can be extended to arbitrary $h$ without much difficulty.

To prove Theorem \ref{firstiden} (recursion for $S_{s,h,q}[n,k]$ and $B_{s,h,q}[n]$), Theorem \ref{seciden} (recursion for $S_{s,h,q}[n+m,k]$ and $B_{s,h,q}[n+m]$) and Corollary \ref{thirdiden} for the case $s\in\mathbb R$, we impose a modification in the assignment of pre-weights when the board is $J_n$. First, as usual, choose columns where we will place rooks. The first rook adds a pre-weight of $s-1$ to each cell that lies above the bottom cell in every column to its left, the second rook adds a pre-weight of $s-1$ to the cell that lies above the cell in every column to its left which has been added a pre-weight of $s-1$ by the first rook, etc. We denote by $C^{\#}_k(J_n;s)$ the set of all rook placements on $J_n$ under this rule, which we call the \emph{modified pre-weight rule}. This modification preserves the sum of the weights of all rook placements given a choice of columns where rooks are to be placed. Hence, 
\begin{equation}\label{refw}
\sum_{\psi\in C^*_k(J_n;s)} \omega(\psi) = \sum_{\rho\in C^{\#}_k(J_n;s)} \omega(\rho)\,.
\end{equation}

\begin{proof}[Proof of Theorems \ref{firstiden} and \ref{seciden} for $s\in\mathbb R$] We only describe some portions of the necessary changes in the proofs of Theorems \ref{firstiden} and \ref{seciden} for $s\in\mathbb R$. We leave the rest of the details to the reader.

For Theorem \ref{firstiden}, we will need the following modification of Lemma \ref{col}: \emph{Let $\phi\in C^{\#}_k(J_n;s)$. Then, there exists a unique (possibly empty) collection $\mathcal C$ of columns in $\phi$ such that if $|\mathcal C|=m+1$, then (a) each of these columns has a rook in the bottom $1,2,3,\ldots,1+m$ cells and (b) every column not in $\mathcal C$ contains at least $1+t$ uncanceled cells not containing a rook, where $t$ is the number of columns in $\mathcal C$ to the right of that column.} 

For $1\leq j\leq 1+m$, the bottom $j$ cells referred to in (a) have a combined pre-weight of $1+s(j-1)$, and hence, the sum of the weights of all possible placement of rooks on these $j$ cells is $h[1+s(j-1)]_q$. On the other hand, the $1+t$ uncanceled cells in (b) have a combined pre-weight of $1+st$ and hence, contributes a weight of $q^{1+st}$.

We now turn to Theorem \ref{seciden}. As in the proof of the case $s\in\mathbb N$, we place $m-j$ rooks in columns $2,\ldots,m$ of $J_{n+m}$. As a consequence, the first $m$ cells from the top of columns $m+1,\ldots,n$ have a total pre-weight of $j(1-s)+sm$. Let $B$ be the board consisting of columns $m+1,\ldots,n$ of $J_{n+m}$ after  $m-j$ rooks were placed. 

It can be verified that the relation \eqref{refw} also holds when some of the cells of $J_n$ have default pre-weights other than 1. Let $J'_{n,\alpha}$ be the board $(VU)^nV$ such that the bottom cells have a default pre-weight of $\alpha$. One sees that sum of the pre-weights of the $t$-th column of $B$ and the sum of the pre-weights of the $t$-th column of $J'_{n,\alpha}$ with $\alpha=j(1-s)+sm$ are equal. This implies that the sum of the weights of the rook placements in $C^{\#}_k(B;s)$ equals that of $C^{\#}_k(J'_{n,\alpha};s)$. The rest of the proof uses a generalization of Lemma \ref{col2}, which is the same statement as the modification of Lemma \ref{col}, except that we replace $C^{\#}_k(J_n;s)$ with $C^{\#}_k(J'_{n,\alpha};s)$.
\end{proof}

\section{Conclusion} We have obtained a number of identities for $S_{s,h,q}[n,k]$, including an orthogonality relation, recurrence formulas and expressions involving other $q$-analogues. In particular, it was shown that $S_{s,h}(n,k)$ can be written in terms of the classical Stirling numbers and $S_{s,h,q}[n,k]$ in terms of $q$-Stirling numbers. Explicit formulas for $S^{\textnormal{\textbf{r}},\textnormal{\textbf{s}}}_{s,h,q}[k]$, and consequently, $S_{s,h,q}[n,k]$, were also given. Recurrence relations for $S_{s,h,q}[n,k]$ and $B_{s,h,q}[n]$ have also been proved using rook placements. As a consequence, we have also provided alternative proofs of the original identities in \cite{ManSchSha1} which were proved using weighted Laguerre configurations. It is possible that the identities for $S_{s,h,q}[n,k]$ in Section \ref{first} also have rook theoretic proofs. When viewed as normal ordering coefficients, the numbers $S^{\textnormal{\textbf{r}},\textnormal{\textbf{s}}}_{s,h,q}[k]$ are defined only for $s\in\mathbb N$. These numbers, as well as the identities they possess which were derived in the earlier sections, were extended to the case $s\in\mathbb R$ using a modified rook model.

We have not considered the corresponding boson operators and their action on coherent states that arise as a consequence of the generalized commutation relation. We note that the $q$-deformed case where $h=1,s=0$ was studied by Schork \cite{Sch} while Blasiak \cite{Bla} studied the undeformed case and obtained the normal ordering of a more general class of expressions involving the boson operators.

\section*{Acknowledgment}
K.J.M. Gonzales would like to thank the Department of Science and Technology (DOST) through the ASTHRD Program for financial support during his stay at the University of the Philippines - Diliman.

\bibliographystyle{amsplain}

\end{document}